\numberwithin{equation}{section}
\theoremstyle{plain}
\newtheorem{conjecture}{Conjecture}
\newtheorem{theorem}{Theorem}
\theoremstyle{plain}
\newtheorem{prop}{Proposition}
\newtheorem{lemma}{Lemma}
\theoremstyle{definition}
\newtheorem{proof}{Proof}
\newtheorem{definition}{Definition}
\newtheorem{remark}{Remark}
\begin{document}

\title{Orthorecursive expansion of unity}
\author{A.\,B.~Kalmynin}
\address{International Laboratory of Mirror Symmetry and Automorphic Forms, Department of Mathematics, Higher School of Economics, Moscow, Usacheva str. 6}
\email{alkalb1995cd@mail.ru}
\author{P.\,R.~Kosenko}
\address{Department of Mathematics, University of Toronto, 40 St. George St., Toronto, ON, Canada}
\email{petr.kosenko@mail.utoronto.ca}
\date{}
\udk{}
\maketitle
\begin{abstract}\textbf{Abstract.} We study the properties of a sequence $c_n$ defined by the recursive relation

\[
\frac{c_0}{n+1}+\frac{c_1}{n+2}+\ldots+\frac{c_n}{2n+1}=0
\]

for $n \geq 1$ and $c_0=1$. This sequence also has an alternative definition in terms of certain norm minimization in the space $L^2([0,1])$. We prove estimates on growth order of $c_n$ and the sequence of its partial sums, infinite series identities, connecting $c_n$ with harmonic numbers $H_n$ and also formulate some conjectures based on numerical computations.
\end{abstract}
\begin{fulltext}
\section{Introduction}
Consider the Hilbert space $\mathcal H=L^2([0,1],dx)$, where $dx$ is a standard Lebesgue measure. It is a well-known fact that the sequence $M=\{1,x,x^2,x^3,\ldots\}$ does not form a Schauder basis of $\mathcal H$, while its linear span is dense in $\mathcal H$. So, it is rather natural to ask how to express this fact quantitatively. Let us pick the function $f\in L^2([0,1])$ and consider the sequence

$$d_n(f,M)=\inf_{(a_0,a_1,\ldots,a_{n-1})\in \mathbb C^n}||f-a_0-a_1x-\ldots-a_{n-1}x^{n-1}||_{\mathcal H}.$$

One can easily show that for arbitrary function $f$ we have $\lim\limits_{n\to +\infty}d_n(f,M)=0$, so this quantity is not useful for our purposes, as it cannot even distinguish between $M$ and orthogonal bases of $\mathcal H$. In order to resolve this problem, we came up with the following definition:

\begin{definition}
Let $v_1,v_2,v_3,\ldots$ be the sequence of vectors of a Hilbert space $H$. Suppose that $v\in H$. Define the sequence $(w_n)_{n \ge 0}$ by the following conditions:
\begin{enumerate}
\item $w_0=v$.
\item There are complex numbers $\lambda_n$ such that $w_{n+1}=w_n+\lambda_{n+1}v_{n+1}$ for all $n\geq 0$.
\item The norm $||w_{n+1}||$ is minimal among all vectors of the form $w_n+\lambda v_n$.

Then $v-w_n$ is called the \emph{orthorecursive expansion} of $v$ with respect to the system $v_1,v_2,\ldots$ and the numbers $\lambda_n$ are called the coefficients of this expansion.

\end{enumerate}
\end{definition}
One can easily show that the sequence $w_n$ is uniquely determined by the conditions 1,2 and 3, so $\lambda_n$ are also uniquely determined if $v_n\neq 0$ for all $n$. It is also very easy to prove that if $\{v_n\}$ is an orthogonal basis of $H$, then the orthorecursive expansion of any vector $v$ with respect to $\{v_n\}$ converges to $v$ and orthorecursive expansion of $v_1$ with respect to the sequence $\{v_{n+1}\}$ is just the sequence of zero vectors. Neither of this holds for our sequence $M\subset \mathcal H$, so orthorecursive expansions at least allow us to distinguish between $M$ and orthogonal bases. In this paper, we will study one very particular orthorecursive expansion. More precisely, the sequence $c_n$ mentioned in the abstract is the sequence of coefficients of orthorecursive expansion of $1$ with respect to the sequence $\{x,x^2,x^3,\ldots\}$.

\begin{definition}
The sequence $c_n$ for $n\geq 1$ is the sequence of coefficients of orthorecursive expansion of the function $1$ with respect to the system $\{x,x^2,\ldots\}$.
\end{definition}

It is also convenient to define $c_0=1$ and $p_n(x)=c_0+c_1x+\ldots+c_nx^n$.

Later we will discuss the proof of the following fact:

\begin{prop}
\label{recurrent}
The sequence $c_n$ satisfies the relation

\begin{equation}
    \frac{c_0}{n+1}+\ldots+\frac{c_n}{2n+1}=\sum_{0\leq k\leq n}\frac{c_k}{n+1+k}=\delta_{n0}
\end{equation}

for all $n\geq 0$.
\end{prop}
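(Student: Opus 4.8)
The plan is to translate the geometric minimality condition in the definition of the orthorecursive expansion into a family of orthogonality relations and then simply expand the resulting inner products as integrals over $[0,1]$.

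First I would record the elementary Hilbert-space fact underlying condition~3: for a fixed vector $u$ and a fixed nonzero vector $w$, the map
\[
\lambda\mapsto\|u+\lambda w\|^2=\|u\|^2+2\,\mathrm{Re}\,\big(\bar\lambda\,\langle u,w\rangle\big)+|\lambda|^2\|w\|^2
\]
is a strictly convex function of $\lambda\in\mathbb C$ with unique minimum at $\lambda=-\langle u,w\rangle/\|w\|^2$, at which value $\langle u+\lambda w,w\rangle=0$. Applying this with $u=w_{n-1}$ and $w=x^n$ (which is nonzero in $\mathcal H$), condition~3 says exactly that the vector $w_n$ produced at the $n$-th step satisfies $w_n\perp x^n$, i.e. $\langle w_n,x^n\rangle_{\mathcal H}=0$ for all $n\geq 1$; the same computation shows $\lambda_n=c_n$ is uniquely determined.

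Next I would identify $w_n$ with $p_n$. Since $w_0=1=c_0=p_0$ and, by the very definition of $c_n$, $w_n=w_{n-1}+\lambda_n x^n=p_{n-1}(x)+c_nx^n=p_n(x)$, induction gives $w_n=p_n$ for every $n\geq 0$. Substituting this into the orthogonality relation and writing the inner product as an integral,
\[
0=\langle p_n,x^n\rangle_{\mathcal H}=\int_0^1\Big(\sum_{0\le k\le n}c_kx^k\Big)x^n\,dx=\sum_{0\le k\le n}c_k\int_0^1x^{k+n}\,dx=\sum_{0\le k\le n}\frac{c_k}{n+1+k}
\]
for all $n\geq 1$, which is the claimed identity with right-hand side $\delta_{n0}=0$. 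For $n=0$ the sum collapses to $c_0/1=1=\delta_{00}$, finishing the proof.

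There is essentially no serious obstacle: the only points needing care are the index bookkeeping (checking that the $n$-th minimization step yields orthogonality to $x^n$, not $x^{n+1}$) and the remark that $x^n\neq 0$ in $\mathcal H$, which is what makes both the uniqueness of $\lambda_n$ and the strict-convexity argument legitimate.
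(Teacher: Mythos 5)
Your proposal is correct and follows essentially the same route as the paper: derive the orthogonality relation $\langle p_n,x^n\rangle=0$ from the norm-minimization condition and then expand the inner product as an integral over $[0,1]$. The only cosmetic difference is that you compute the complex minimizer $\lambda=-\langle u,w\rangle/\|w\|^2$ directly, whereas the paper first shows by induction that the coefficients are real and then differentiates with respect to a real parameter; both yield the same orthogonality relation.
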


This proposition is rather useful for our numerical considerations, as it allows us to compute $c_n$ very fast. For example, one can compute the first few terms of our sequence. It starts as follows:

\[
1,-3/2,5/24,77/720,277/4480,140173/3628800\ldots
\]

Extended computation of $c_n$ reveals rather peculiar properties of this sequence. Unfortunately, it also turns out that it is difficult to control the behavior of $c_n$. In subsequent sections we will prove some results on the growth of $c_n$ and the sequence of partial sums of our sequence, deduce the determinant formula for $c_n$ and the integral equation for its ordinary generating function, formulate certain conjectures on $c_n$ that are based on numerical evidence and finally find certain family of infinite series that connect $c_n$ with harmonic numbers and $\pi$. 

\begin{remark}
This construction has been studied by several authors before, even for an abstract Hilbert space, for example, see \cite{luk1},\cite{luk2}. In contrast to the mentioned papers, our work is more concentrated around the study of some particular sequence of rational numbers that arises as a sequence of coefficients of a particular orthorecursive expansion. Therefore, our paper can be considered as more number-theoretic than functional-analytic, and indeed, some rather subtle number-theoretic properties of permutations provide us with the proof that $c_n$ is never zero (see Section 4). We also believe that our sequence is related to several other topics in number theory, such as the properties of Dirichlet series (Section 5) and harmonic numbers (Section 6).

%To the surprise of both authors, there are not a lot of papers related to the number-theoretic aspects of sequences defined by recurrence relations of unbounded order, despite the fact that some of the most important rational sequences, such as Bernoulli numbers, are defined this way. (Честно говоря, я не нахожу это таким уж удивительным, поскольку вселенная таких последовательностей чересчур огромна и многогранна, чтобы можно было надеяться на то, что все они ложатся в какую-либо единую осязаемую картину)
\end{remark}

%The structure of this paper is as follows: in the Section 2 we prove the recurrence relation on $c_n$, provide a description in terms of a determinant and an integral equation on the generating function of $c_n$. in the Section 3 we prove the upper bounds on $c_n$.

\section{Basic properties and alternative definitions of \texorpdfstring{$c_n$}{cn}.}

In this section we will prove the Proposition \ref{recurrent} and deduce two new ways to define $c_n$, using determinants of almost lower diagonal matrices and solution of certain integral equation.

Now we start the

 \begin{proof}[of Proposition \ref{recurrent}]
 Let us define the polynomials $p_n(x)$ by $p_n(x)=c_0+\ldots+c_nx^n$, as in previous section. By the main definition we see that for any $n \geq 0$ the norm of $p_n(x)+cx^{n+1}$ is minimal when $c=c_{n+1}$. From this it is easy to see that $c_{n+1}$ is always real. 
 
 Indeed, recall that $c_0  = 1$, and if $c_k\in \mathbb R$ for all $0\leq k\leq n$ then for all complex $c$ we have $||p_n(x)+cx^{n+1}||\geq ||p_n(x)+(\mathrm{Re}\,c)x^{n+1}||$ and equality is attained only for $c \in \mathbb R$, so $c_{n+1} \in \mathbb R$, as needed.
 
 Knowing that, we notice that if the norm $\left\langle p_{n-1} + c x^n, p_{n-1} + c x^n \right\rangle$ attains the minimum at $c \in \mathbb{R}$, then it is the critical point of the inner product as a function of $c$:
 \begin{equation}
	\frac{\partial}{\partial c_n} \left\| p_n \right\|_2^2 = \frac{\partial}{\partial c_n} \left\langle p_{n-1} + c_n x^n, p_{n-1} + c_n x^n  \right\rangle =  2 \left\langle p_{n-1}, x^n \right\rangle + 2 \left\langle x^n, x^n \right\rangle c_n = 0.
	\end{equation}
	This immediately implies that
	\begin{equation}
	    c_n = -\dfrac{\left\langle p_{n-1}, x^n \right\rangle }{ \left\langle x^n, x^n  \right\rangle } = -(2n+1) \left\langle p_{n-1}, x^n \right\rangle
	\end{equation}
	and
	\begin{equation}
	\label{prop1}
	    \left\langle p_n, x^n \right\rangle = 0
	\end{equation}
	for any $n > 0$. Now let us expand the inner product in \eqref{prop1}:
	\begin{equation}
	    \left\langle p_n, x^n \right\rangle = \left\langle \sum_{k=0}^n c_k x^k, x^n \right\rangle = \sum_{k=0}^n \dfrac{c_k}{n+k+1} = 0.
	\end{equation}
	The relation \eqref{prop1} has a simple geometric interpretation: as $p_n(x)$ is the shortest possible vector that connects the point $p_{n-1}(x)$ with the line $\{\lambda x^n\}_{\lambda\in \mathbb R}$, it should be orthogonal to this line, which is indeed the case.
 \end{proof} 
\subsection{Expressing the coefficients as determinants}

Let's write down the first few instances of \eqref{recurrent}:
	\[
	\begin{split}
	\frac{c_0}{2} + \frac{c_1}{3} &= 0 \\
	\frac{c_0}{3} + \frac{c_1}{4} + \frac{c_2}{5} &= 0 \\
	\frac{c_0}{4} + \frac{c_1}{5} + \frac{c_2}{6} + \frac{c_3}{7} &= 0 \\
	& \dots
	\end{split}
	\]
	Notice that for each $n$ the first $n$ equations form a system of linear equations:
	\[
	\begin{split}
	\frac{c_1}{3} &= -\frac{1}{2} \\
	\frac{c_1}{4} + \frac{c_2}{5} &= -\frac{1}{3} \\
	\frac{c_1}{5} + \frac{c_2}{6} + \frac{c_3}{7} &= -\frac{1}{4} \\
	& \dots \\
	\frac{c_1}{n+2} + \dots + \frac{c_n}{2n+1} &= -\frac{1}{n+1}.
	\end{split}
	\]
	Therefore, due to the Kramer's rule, we have the following expression for $c_n$:
	\begin{equation}
	\label{determinant}
	\begin{aligned}
		c_n &= (2n+1)!! \begin{vmatrix}
	\frac{1}{3} & 0 & 0 & \dots & -\frac{1}{2} \\[6pt]
	\frac{1}{4} & \frac{1}{5} & 0 & \dots & -\frac{1}{3} \\[6pt]
	\frac{1}{5} & \frac{1}{6} & \frac{1}{7} & \dots & -\frac{1}{4} \\[6pt]
	& &  & \ddots & \\[6pt] 
	\frac{1}{n+2} & \frac{1}{n+3} & \frac{1}{n+4} & \dots & -\frac{1}{n+1}
	\end{vmatrix} = \\ &= (-1)^n (2n+1)!! \begin{vmatrix}
	\frac{1}{2} & \frac{1}{3} & 0 & \dots & 0 \\[6pt]
	\frac{1}{3} & \frac{1}{4} & \frac{1}{5} & \dots & 0 \\[6pt]
	\frac{1}{4} & \frac{1}{5} & \frac{1}{6} & \dots & 0 \\[6pt]
	& &  & \ddots & \\[6pt] 
	\frac{1}{n+1} & \frac{1}{n+2} & \frac{1}{n+3} & \dots & \frac{1}{2n}
	\end{vmatrix} := \\ &:= (-1)^n(2n+1)!! \det(A_n),
	\end{aligned}
	\end{equation}
	where
	\[
	(A_n)_{ij} = \begin{cases}
	\dfrac{1}{j+i},& j - i \le 1, \\
	0, & \text{ otherwise}
	\end{cases}
	\]
	for all $n \in \mathbb{N}$, and $1 \le i, j \le n$.
	\subsection{An integral equation for the generating function of \texorpdfstring{$c_n$}{cn}}
	
 Using the identity \eqref{prop1}, one can derive an integral equation for the ordinary generating function of $c_n$. Let $F(t)=c_0+c_1t+c_2t^2+\ldots=\sum\limits_{k\geq 0}c_kt^k$. In subsequent sections we will prove that this series converges for all $|t|\leq 1$. Here we will prove that for any $0\leq t<1$ we have
 \begin{equation}
 \label{integraleq}
 \int_0^1 \frac{F(xt^2)}{1-tx}\mathop{dx}=1.
 \end{equation}
 To prove this formula, let us multiply the identities \eqref{prop1} by $t^n$ and sum over all $n\geq 0$. Then we get
 
 \[
 \sum_{n\geq 0}t^n\langle x^n,p_n(x)\rangle=1+0t+0t^2+\ldots=1
 \]
 
 Next, note that for every $n$ the $n$-th summand on the left-hand side is equal to
 
 \[
 t^n\langle x^n,p_n(x)\rangle=t^n\langle x^n,\sum_{k\leq n}c_kx^k \rangle=
 \sum_{k\leq n} c_k\langle t^n x^n, x^k\rangle=\sum_{k \leq n}\langle c_k t^n x^{n+k}, 1\rangle.
 \]
 
 Therefore,
 
 \[
 \sum_{k\geq 0} \langle c_k x^k \sum_{n\geq k} t^n x^n,1 \rangle=\sum_{n\geq k\geq 0}\langle c_k t^n x^{n+k}, 1\rangle=1.
 \]
 
 Next, for all nonnegative $k$ we have
 
 \[
 \sum_{n\geq k}t^nx^n=(tx)^k\sum_{n\geq 0}(tx)^n=\frac{(tx)^k}{1-tx}.
 \]
 
 Using this identity, we conclude that
 
 \[
 \int_0^1 \frac{F(xt^2)}{1-tx}dx=\left\langle \frac{F(tx^2)}{1-tx},1 \right\rangle=\left\langle\sum_{k\geq 0}\frac{c_k(tx^2)^k}{1-tx},1\right\rangle=\left\langle\sum_{k\geq 0}c_k x^k\sum_{n\geq k}t^nx^n,1 \right\rangle=1,
 \]
 
 which proves the desired formula.
 
 Unfortunately, we were not able to find a solution of this integral equation in any form other than $F(t)=\sum\limits_{k\geq 0} c_kt^k$.
 
\section{Upper bounds and partial sums}
 
 In order to get a better understanding of the sequence $(c_n)$, it is natural to ask what is its growth order. Using the formula \eqref{prop1}, one can easily show that
 
 \[
 \begin{aligned}
  ||p_n(x)||^2 &=||p_{n+1}(x)-c_{n+1}x^{n+1}||^2= \\ &=||p_{n+1}(x)||^2-2c_{n+1}\langle p_{n+1}(x),x^{n+1}\rangle+\frac{c_{n+1}^2}{2n+3}=||p_{n+1}(x)||^2+\frac{c_{n+1}^2}{2n+3}
 \end{aligned}
 \]
 
for all $n\geq 0$, so that for all $n$
\begin{equation}
\label{normp}
||p_n||^2=1-\sum_{1\leq k\leq n}\frac{c_k^2}{2k+1}.
\end{equation}
Therefore, due to positivity of the norm, we prove that $c_n=O(\sqrt{n})$, that is, $c_n$ grows at most polynomially. In fact, it turns out that $c_n$ \begin{it}{decreases}\end{it} with an at least polynomial rate. The goal of this section is to prove the following result:

\begin{theorem}
\label{32estimate}
There is a positive constant $C$ such that for all $n\geq 1$ we have
\begin{equation}
\label{bestestim}
|c_n|\leq \frac{C}{n^{3/2}}    
\end{equation}
\end{theorem}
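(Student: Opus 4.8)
The plan is to reduce the estimate on $c_n$ to an estimate on the partial sums $S_n:=c_0+c_1+\dots+c_n=p_n(1)$, and then to bound $S_n$.

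First rewrite $c_n$. By \eqref{prop1} we have $\langle p_{n-1},x^{n-1}\rangle=0$ for $n\ge 2$, so
\[
c_n=-(2n+1)\langle p_{n-1},x^n\rangle=(2n+1)\langle p_{n-1},x^{n-1}(1-x)\rangle ,
\]
and Cauchy--Schwarz together with $\|p_{n-1}\|\le\|p_0\|=1$ and $\|x^{n-1}(1-x)\|^2=\frac{2}{(2n-1)(2n)(2n+1)}$ gives at once $|c_n|\le(1+o(1))\,n^{-1/2}$. This is short of \eqref{bestestim} by one power of $n$, and the loss comes from the terms $c_k$ with $k$ close to $n$: those enter \eqref{recurrent} in a telescoping pattern, so they are governed by the $S_k$ rather than by the $c_k$ individually. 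Concretely, subtracting consecutive instances of \eqref{recurrent} and then summing by parts ($c_k=S_k-S_{k-1}$, $S_{-1}=0$) turns \eqref{recurrent} into
\begin{equation}\label{abelc}
\frac{c_n}{2n+1}=\frac{S_{n-1}}{(2n-1)(2n)}+2\sum_{k=0}^{n-2}\frac{S_k}{(n+k)(n+k+1)(n+k+2)}\qquad(n\ge2),
\end{equation}
and writing $S_n=\sum_{k\le n} c_k\bigl(1-\tfrac{2n+1}{n+k+1}\bigr)$ (legitimate because $\sum_{k\le n}\tfrac{c_k}{n+k+1}=0$ by \eqref{prop1}) and summing by parts again yields the self-contained recurrence
\begin{equation}\label{recS}
S_n=-(2n+1)\sum_{k=0}^{n-1}\frac{S_k}{(n+k+1)(n+k+2)}\qquad(n\ge1).
\end{equation}

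From \eqref{recS} a clean a priori bound is immediate: since $\sum_{k=0}^{n-1}\frac1{(n+k+1)(n+k+2)}=\frac1{n+1}-\frac1{2n+1}=\frac{n}{(n+1)(2n+1)}$, the hypothesis $|S_k|\le1$ for $k<n$ forces $|S_n|\le(2n+1)\cdot\frac{n}{(n+1)(2n+1)}<1$; as $|S_0|=1$, induction gives $|S_n|\le1$ for all $n$. Substituting this into \eqref{abelc} and telescoping the remaining sum gives $|c_n|\le\frac{2n+1}{n(n+1)}<\frac2n$. Moreover, a bound $|S_k|\le M(k+1)^{-1/2}$ for all $k$ would, via \eqref{abelc}, produce $|c_n|\le C'Mn^{-3/2}$ with an absolute constant $C'$ (the factor $(k+1)^{-1/2}$ makes the series converge), which is precisely \eqref{bestestim}. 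Thus Theorem~\ref{32estimate} reduces to the bound $S_n=O(n^{-1/2})$.

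For the partial sums, the recurrence \eqref{recS} by itself is not enough --- its weights sum to $\frac{n}{n+1}\to1$, so it cannot force $S_n\to0$. The missing ingredient is $\sum_{k\ge0}c_k=0$, which is in any case necessary: if $\sigma:=\sum c_k\ne0$, then \eqref{abelc} forces $c_n\sim2\sigma/n$. This identity is available from the integral equation \eqref{integraleq}: letting $t\to1^-$ in $\int_0^1\frac{F(xt^2)}{1-tx}\,dx=1$, the kernel $(1-tx)^{-1}$ is integrable in the limit only if $F(1)=0$. The genuinely delicate step --- which I expect to be the main obstacle --- is justifying this passage to the limit with only a weak a priori bound on $c_n$; here the bounds $|S_n|\le1$ and $|c_n|<2/n$ above (and, if necessary, one more pass through \eqref{abelc}--\eqref{recS}) should make $F$ continuous up to $x=1$ and legitimize it. Once $\sum c_k=0$ is known one has $S_n\to0$ and $S_n=-\sum_{k>n}c_k$; feeding this back into \eqref{abelc} and \eqref{recS}, while separating in both recurrences the small-$k$ contribution (where $|S_k|\le1$ suffices) from the tail (where the already-improved decay applies), should bootstrap $S_n$ down to $O(n^{-1/2})$ and, by the reduction above, finish the proof.
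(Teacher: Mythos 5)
Your reductions are correct as far as they go: the two Abel summations of \eqref{recurrent} are valid, the induction giving $|S_n|\le 1$ and hence $|c_n|<2/n$ is sound, and it is true that a bound $S_n=O(n^{-1/2})$ would, fed back through your formula for $c_n/(2n+1)$, yield $|c_n|=O(n^{-3/2})$. But the proof stops exactly where the theorem begins: you never prove $S_n=O(n^{-1/2})$, and the mechanism you sketch for it cannot work. Your recurrence $S_n=-(2n+1)\sum_{k=0}^{n-1}\frac{S_k}{(n+k+1)(n+k+2)}$ is not a contraction at any polynomially decaying scale. If one assumes $|S_k|\le M(k+1)^{-\alpha}$ for $k<n$ and takes absolute values, the resulting bound is asymptotically $2M n^{-\alpha}\int_0^1 u^{-\alpha}(1+u)^{-2}\,du$, and the factor $2\int_0^1 u^{-\alpha}(1+u)^{-2}\,du$ equals $1$ at $\alpha=0$ and is strictly greater than $1$ for every $\alpha>0$ (at $\alpha=1/2$ it is $1+\pi/2\approx 2.57$). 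So no absolute-value induction through this recurrence can propagate any decay of $S_n$, with or without the extra input $\sum c_k=0$; one would have to exploit sign cancellation among the $S_k$, which is a different and much harder problem. Moreover, the auxiliary fact $\sum c_k=0$ is itself not established: letting $t\to 1^-$ in \eqref{integraleq} only forces $F(1)=0$ if you already know $F$ extends continuously to $t=1$, i.e.\ that $\sum c_n$ converges, and your a priori bounds ($|S_n|\le 1$, $|c_n|<2/n$) give only boundedness of $F$ on $[0,1)$, not a limit.

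The missing idea, which is the actual content of the paper's proof, is a third quantity tying $c_n$ and $s_n$ together: $D(n)=\int_0^1 p_n'(x)^2\,dx$. Integration by parts against $x^n(1-x)$ and against $x^{n+1}$ gives $|c_n|\le \sqrt{D(n)}\,n^{-3/2}$ and $s_n^2\le D(n)/(2n+3)$ (Lemmas \ref{lemma1} and \ref{lemma2}), while an induction on the expansion $p_{k+1}'=p_k'+(k+1)c_{k+1}x^k$ bounds $D(n)$ from above by $ns_n^2+(n+1)c_n^2/2+\sum_{k<n}(k+1)c_k^2$ (Lemma \ref{lemma3}). Combining these eliminates both $D(n)$ and $s_n$ and yields the genuinely contractive inequality $c_n^2\le \frac{4}{n^3}\sum_{k<n}(k+1)c_k^2$, which bootstraps from $c_n^2=O(n)$ down to $c_n^2=O(n^{-3})$ in four iterations. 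Without some substitute for the derivative norm $D(n)$ (or another source of the crucial factor $n^{-3}$), your outline cannot be completed.
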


To prove this theorem, we are going to introduce two auxiliary quantities. For all $n\geq 0$ we will denote by $s_n=c_0+c_1+\ldots+c_n$ the sequence of partial sums of $c_n$ and we also set

\[
D(n)=\int_0^1 p_n'(x)^2 \mathop{dx},
\]

i.e. $D(n)$ is a sequence of squared $L^2$-norms of the derivatives of polynomials $p_n(x)$. For example, one can easily see that $D(0)=0, D(1)=9/4$ and $s_0=1$. Numerical computations also show that, for example $s_{100}\approx 0.001888$ and $s_{729}\approx -0.000124$, so it is reasonable to conjecture that $s_n\to 0$ as $n\to \infty$. This is indeed the case, as we will demonstrate later. %in this section.

Let us establish some relations between $c_n, s_n$ and $D(n)$. In this section our main tools are Cauchy-Bunyakovsky-Schwarz inequality and integration by parts.

\begin{lemma}
\label{lemma1}
For all $n\geq 2$ the inequality

\[
|c_n|\leq \sqrt{D(n)}n^{-3/2}
\]

holds.

\end{lemma}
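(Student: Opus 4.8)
The plan is to extract the coefficient $c_n$ from $p_n'(x)$ by integrating against a suitable test function and then apply Cauchy--Bunyakovsky--Schwarz. The key observation is that $\langle p_n, x^n\rangle = 0$ by \eqref{prop1}, which combined with an integration-by-parts argument should let us isolate $c_n$. Concretely, note that $p_n'(x) = \sum_{k=1}^n k c_k x^{k-1}$, so for any weight $w(x)$ we have $\int_0^1 p_n'(x) w(x)\,dx = \sum_{k=1}^n k c_k \int_0^1 x^{k-1} w(x)\,dx$. I want to choose $w$ so that $\int_0^1 x^{k-1} w(x)\,dx$ vanishes for $1\le k\le n-1$ but is nonzero (and explicitly computable) for $k=n$; then the integral equals $n c_n \int_0^1 x^{n-1}w(x)\,dx$ and I can solve for $c_n$.

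The natural candidate, given the structure of \eqref{prop1}, is $w(x) = x^n - (\text{something})$, or more precisely I would look for $w$ built from $p_n$ itself. Here is the cleaner route: integrate by parts to get $\int_0^1 p_n'(x)(1-x) x^{n-1}\,dx$ — actually, the slickest choice exploits that $\langle p_n, x^{n}\rangle = 0$ together with $\langle p_n, x^{n-1}\rangle$ being controlled. Let me instead directly consider $\int_0^1 p_n'(x)(x^n - x^{n-1})\,dx$ or similar. Using $\int_0^1 x^{k-1}(x^n-x^{n-1})\,dx = \frac{1}{n+k} - \frac{1}{n+k-1}$, this does not immediately telescope. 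The approach that genuinely works is to integrate by parts: $\int_0^1 p_n'(x) x^n \,dx = [p_n(x) x^n]_0^1 - n\int_0^1 p_n(x) x^{n-1}\,dx = p_n(1) - n\langle p_n, x^{n-1}\rangle$, and separately $\langle p_n, x^n\rangle = 0$ gives one more linear relation among the moments $\langle p_n, x^{n-1}\rangle$ and the leading behaviour. Combining these, and using that $\int_0^1 p_n'(x) \cdot x^n\,dx$ picks out $\sum k c_k/(n+k)$ whose top term is $\frac{n c_n}{2n+1}$, I should be able to write $\frac{n c_n}{2n+1}$ as an inner product of $p_n'$ with a fixed function and a combination of quantities that are themselves $O(1)$ or better; then $|c_n| \le \frac{2n+1}{n}\,\|p_n'\|_2 \cdot \|g\|_2$ for the relevant $g$, and one checks $\|g\|_2 = O(n^{-1/2})$, yielding $|c_n| \le \sqrt{D(n)}\, n^{-3/2}$ up to the constant bookkeeping that must come out to exactly $1$.

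The main obstacle I anticipate is getting the constant to be exactly $1$ rather than merely $O(n^{-3/2})$: the estimate \eqref{bestestim} in Theorem \ref{32estimate} allows an unspecified $C$, but Lemma \ref{lemma1} is stated cleanly with constant $1$, so the test function and the integration-by-parts identity have to be chosen so that the Cauchy--Schwarz step is essentially tight in its dependence on $n$. I expect the correct test function is something like $g(x) = x^{n} - n x^{n-1}$ or a normalized version thereof, arranged precisely so that $\langle p_n', g\rangle$ collapses — via $\langle p_n, x^n\rangle = 0$ and the endpoint terms from integration by parts — to a single explicit multiple of $c_n$, with the $L^2$ norm of $g$ computing out to exactly $n^{-3/2}\sqrt{\text{(the same factor)}}$. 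Verifying that the endpoint/boundary contributions ($p_n(1) = s_n$ in particular) either cancel or are absorbed correctly is the delicate point; the restriction $n\ge 2$ suggests a small-index degeneracy in whichever identity is used, which I would handle by direct inspection.
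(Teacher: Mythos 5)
Your overall strategy --- pair $p_n'$ with a well-chosen test function, integrate by parts, and finish with Cauchy--Bunyakovsky--Schwarz --- is exactly the paper's strategy, but the proposal stops short of a proof: you never identify the test function that makes the computation close, and the candidates you do float ($x^n - x^{n-1}$, $x^n - n x^{n-1}$) are the wrong ones. The correct choice is $w(x) = x^n(1-x)$. Because this vanishes at \emph{both} endpoints, integration by parts produces no boundary term at all, so the quantity $p_n(1)=s_n$ that you flag as "the delicate point" simply never appears; worrying about it is a symptom of having the wrong $w$. One gets
\[
\int_0^1 p_n'(x)\,x^n(1-x)\,dx \;=\; -\int_0^1 p_n(x)\bigl(n x^{n-1}-(n+1)x^n\bigr)\,dx \;=\; -n\int_0^1 p_n(x)\,x^{n-1}\,dx,
\]
where the second equality uses $\langle p_n, x^n\rangle=0$.

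The second missing idea is that one application of \eqref{prop1} is not enough. To evaluate $-n\langle p_n, x^{n-1}\rangle$ you must split $p_n = p_{n-1} + c_n x^n$ and invoke the orthogonality relation \emph{at the previous index}, $\langle p_{n-1}, x^{n-1}\rangle = 0$; only then does the integral collapse to $-n\int_0^1 c_n x^{2n-1}\,dx = -c_n/2$, a single explicit multiple of $c_n$ with no leftover moments. Your sketch only ever invokes $\langle p_n, x^n\rangle=0$, and your alternative remark that $\int_0^1 p_n'(x)x^n\,dx$ "picks out $\sum k c_k/(n+k)$ whose top term is $nc_n/(2n+1)$" does not isolate $c_n$, since the lower-order terms of that sum do not vanish. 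Once the identity $\langle p_n', x^n(1-x)\rangle = -c_n/2$ is in hand, the constant bookkeeping you were concerned about is automatic: $\|x^n(1-x)\|^2 = \tfrac{2}{(2n+1)(2n+2)(2n+3)} \le \tfrac{1}{4n^3}$, so Cauchy--Schwarz gives $|c_n|/2 \le \sqrt{D(n)}\cdot\tfrac{1}{2} n^{-3/2}$, which is the lemma with constant exactly $1$.
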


\begin{proof}
Let's consider the following integral:

\[\int_0^1 p_n'(x)x^n(1-x)\mathop{dx}.
\]

As the integrand is zero at the boundary points of our interval, one application of integration by parts gives

\[\int_0^1 p_n'(x)x^n(1-x)\mathop{dx}=\int_0^1 x^n(1-x) d p_n(x)=-\int_0^1 p_n(x)(nx^{n-1}-(n+1)x^n)\mathop{dx}.
\]

Now, the formula \eqref{prop1} allows us to get rid of the second summand, as $p_n$ and $x^n$ are orthogonal. Thus, we obtain

\begin{equation}
\label{pncn}
\begin{aligned}
\int_0^1 p_n'(x)x^n(1-x)\mathop{dx}=-n\int_0^1 p_n(x)x^{n-1}\mathop{dx}=\\
=-n\int_0^1 c_n x^{2n-1}\mathop{dx}-n\int_0^1 p_{n-1}(x)x^{n-1}\mathop{dx}=-c_n/2,
\end{aligned}
\end{equation}

due to orthogonality of $p_{n-1}(x)$ and $x^{n-1}$. 

Next, by Cauchy-Bunyakovsky-Schwarz inequality, the integral on the left side of our equality can be estimated as follows:

\[
\begin{aligned}
\left|\int_0^1 p_n'(x)x^n(1-x)\mathop{dx}\right|&=|(p_n'(x),x^n(1-x))|\leq ||p_n'(x)||\cdot||x^n(1-x)||= \\ &=\sqrt{D(n)}||x^n(1-x)||.
\end{aligned}
\]

Finally, the square of the last norm is equal to

\[\int_0^1 x^{2n}(1-x)^2\text{d}x=\frac{\Gamma(2n+1)\Gamma(3)}{\Gamma(2n+4)}=\frac{2}{(2n+3)(2n+2)(2n+1)}\leq \frac{1}{4n^3}.
\]

Therefore, we obtain

\[|c_n|/2\leq \sqrt{D(n)}\sqrt{\frac{1}{4n^3}}=\sqrt{D(n)}n^{-3/2}/2,
\]

which proves the desired inequality.
\end{proof}

Next lemma shows that the behavior of $s_n$ is also controlled by $D(n)$ in a similar manner.

\begin{lemma}
\label{lemma2}
For all $n\geq 1$ we have

\[s_n^2\leq \frac{D(n)}{2n+3}
\]
\end{lemma}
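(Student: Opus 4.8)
The plan is to realize $s_n$ as a single inner product against $p_n'$ and then apply Cauchy--Bunyakovsky--Schwarz, exactly as in the proof of Lemma \ref{lemma1}. The key observation is that $s_n=c_0+\ldots+c_n=p_n(1)$, so we want a weight function $g(x)$ for which integration by parts converts $\int_0^1 p_n'(x)g(x)\,dx$ into $p_n(1)$ up to a harmless constant. Writing $\int_0^1 p_n'(x)g(x)\,dx=[p_n(x)g(x)]_0^1-\int_0^1 p_n(x)g'(x)\,dx$, we would like the boundary term at $0$ to vanish and the remaining integral to be killed by the orthogonality relation \eqref{prop1}. Both happen for $g(x)=x^{n+1}$: then $g(0)=0$ (and in fact $p_n(0)\cdot 0^{n+1}=0$ regardless), while $g'(x)=(n+1)x^n$ gives $\int_0^1 p_n(x)g'(x)\,dx=(n+1)\langle p_n,x^n\rangle=0$.

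First I would record the identity
\[
s_n=p_n(1)=\int_0^1 p_n'(x)x^{n+1}\,dx=\langle p_n'(x),x^{n+1}\rangle,
\]
obtained by the integration by parts just described together with \eqref{prop1}. Next I would apply Cauchy--Bunyakovsky--Schwarz:
\[
|s_n|=|\langle p_n'(x),x^{n+1}\rangle|\leq \|p_n'(x)\|\cdot\|x^{n+1}\|=\sqrt{D(n)}\sqrt{\int_0^1 x^{2n+2}\,dx}=\frac{\sqrt{D(n)}}{\sqrt{2n+3}}.
\]
Squaring this inequality gives $s_n^2\leq \dfrac{D(n)}{2n+3}$, which is the claim.

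There is essentially no serious obstacle here; the only non-mechanical step is guessing the correct test function $x^{n+1}$, and this is forced by the two requirements above (vanishing boundary term at $0$ and applicability of the orthogonality relation $\langle p_n,x^n\rangle=0$). The rest is a routine Beta-integral evaluation and one application of Cauchy--Bunyakovsky--Schwarz, mirroring Lemma \ref{lemma1}. I would keep the write-up to a few lines and note explicitly that the hypothesis $n\geq 1$ is what guarantees $x^{n+1}$ vanishes at $0$ and that $\langle p_n,x^n\rangle=0$ is available.
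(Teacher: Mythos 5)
Your proof is correct and follows exactly the same route as the paper: integration by parts against $x^{n+1}$, using $\langle p_n,x^n\rangle=0$ to kill the remaining integral and obtain $s_n=\langle p_n',x^{n+1}\rangle$, then Cauchy--Bunyakovsky--Schwarz with $\|x^{n+1}\|^2=1/(2n+3)$. (One small remark: the relevant role of $n\geq 1$ is that the orthogonality relation \eqref{prop1} is only asserted for $n>0$; the boundary term at $0$ vanishes for every $n\geq 0$.)
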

\begin{proof}
Indeed, integrating by parts we deduce

\[\int_0^1 p_n'(x)x^{n+1}\mathop{dx}=\int_0^1 x^{n+1} d p_n(x)=p_n(1)-(n+1)\int_0^1 x^n p_n(x)\mathop{dx}.
\]

Yet another application of \eqref{prop1} together with a simple observation that $p_n(1)=c_0+c_1+\ldots+c_n=s_n$ gives

\begin{equation}
\label{pnsn}
    \int_0^1 p_n'(x)x^{n+1}\mathop{dx}=s_n.
\end{equation}
Next, by Cauchy-Bunyakowski-Schwarz, we obtain

\[s_n^2=\langle p_n'(x),x^{n+1}\rangle^2\leq ||p_n'(x)||^2||x^{n+1}||^2=\frac{D(n)}{2n+3},
\]

as needed.
\end{proof}

The next formula is a core of our argument, as it allows to ``reverse'' the lemmas \ref{lemma1} and \ref{lemma2} in a certain sense, i.e. it provides us with a bound for $D(n)$ in terms of $s_n$ and $c_k$ for $0\leq k\leq n$.

\begin{lemma}
\label{lemma3}
For every $n\geq 1$

\[
D(n)\leq ns_n^2+(n+1)c_n^2/2+\sum_{k=0}^{n-1} (k+1)c_k^2.
\]

\end{lemma}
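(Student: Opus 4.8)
The plan is to prove the inequality by induction on $n$, using the one-step decomposition $p_n=p_{n-1}+c_nx^n$, which gives $p_n'(x)=p_{n-1}'(x)+nc_nx^{n-1}$. Expanding the square of the norm and using $\|x^{n-1}\|^2=\tfrac{1}{2n-1}$,
\[
D(n)=\|p_n'\|^2=D(n-1)+2nc_n\langle p_{n-1}',x^{n-1}\rangle+\frac{n^2c_n^2}{2n-1}.
\]
The key input is a closed form for $\langle p_{n-1}',x^{n-1}\rangle$: combining the identities \eqref{pnsn} and \eqref{pncn} from the proofs of Lemmas \ref{lemma2} and \ref{lemma1} (with $n$ replaced by $n-1$) gives $\langle p_{n-1}',x^{n-1}\rangle=s_{n-1}-\tfrac{c_{n-1}}{2}$, valid as soon as $n-1\ge2$. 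Hence for $n\ge3$ we obtain the clean recurrence
\[
D(n)=D(n-1)+2nc_ns_{n-1}-nc_nc_{n-1}+\frac{n^2c_n^2}{2n-1}.
\]

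For the inductive step I would substitute the induction hypothesis $D(n-1)\le(n-1)s_{n-1}^2+\tfrac n2c_{n-1}^2+\sum_{k=0}^{n-2}(k+1)c_k^2$ and rewrite the target term as $ns_n^2=ns_{n-1}^2+2nc_ns_{n-1}+nc_n^2$ via $s_n=s_{n-1}+c_n$. The decisive point is that the cross term $2nc_ns_{n-1}$ thereby produced cancels the one coming from the recurrence. After this cancellation, and after peeling off $nc_{n-1}^2$ from $\sum_{k=0}^{n-1}(k+1)c_k^2$, the entire claim reduces to the scalar inequality
\[
-nc_nc_{n-1}\le s_{n-1}^2+\Big(n+\tfrac{n+1}{2}-\tfrac{n^2}{2n-1}\Big)c_n^2+\tfrac n2c_{n-1}^2 .
\]
Since $\tfrac{n^2}{2n-1}\le n$ for $n\ge1$, the coefficient of $c_n^2$ on the right is at least $\tfrac{n+1}{2}>\tfrac n2$; together with $s_{n-1}^2\ge0$ this makes the right-hand side dominate $\tfrac n2(c_n^2+c_{n-1}^2)\ge-nc_nc_{n-1}$, so the inequality follows from one application of AM--GM.

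Finally, the recurrence above is \emph{not} valid at $n=2$, since there $\langle p_1,x^0\rangle=\langle1,1\rangle=1\ne0$ and consequently $\langle p_1',x^1\rangle=c_1/2$ rather than $s_1-c_1/2$; so I would check the base cases $n=1$ and $n=2$ directly from $c_0=1$, $c_1=-3/2$, $c_2=5/24$ and $D(1)=9/4$, $D(2)=727/432$, where in both cases the right-hand side exceeds the left by a wide margin. I expect the main obstacle to be the constant bookkeeping rather than any conceptual point: one has to verify that the contribution $\tfrac{n^2c_n^2}{2n-1}$ from $\|x^{n-1}\|^2$, the gain in the $c_{n-1}^2$-coefficient when passing from $\tfrac n2$ in the hypothesis to $n$ in the target, and the slack in $s_{n-1}^2$ together suffice to absorb the sign-indefinite term $-nc_nc_{n-1}$ — the substitution $s_n=s_{n-1}+c_n$ being exactly what arranges this, and one must keep in mind throughout that the recurrence only applies for $n\ge3$.
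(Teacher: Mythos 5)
Your proof is correct and follows essentially the same route as the paper's: induction via the one-step expansion of $D(n)=D(n-1)+2nc_n\langle p_{n-1}',x^{n-1}\rangle+\tfrac{n^2c_n^2}{2n-1}$, evaluation of the cross term through \eqref{pncn} and \eqref{pnsn}, the substitution $s_n=s_{n-1}+c_n$, and AM--GM. You are in fact slightly more careful than the paper, which runs its inductive step already from $n=1$ to $n=2$ even though \eqref{pncn} fails at $n=1$ (since $\langle p_0,x^0\rangle=1\neq 0$); your separate direct verification of the base case $n=2$ closes that small gap.
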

\begin{proof}
We will proceed by induction. First, for $n=1$ we have $D(1)=9/4$ on the left side and $s_1^2+c_1^2+\sum\limits_{0\leq k\leq 0}c_k^2=1/4+9/4+1=7/2>9/4$, as needed. Suppose that our inequality holds for $n=k$. For $n=k+1$ we have
\[
D(n)=D(k+1)=\langle p_{k+1}'(x),p_{k+1}'(x)\rangle=\langle p_k'+(k+1)c_{k+1}x^k,p_k'(x)+(k+1)c_{k+1}x^k\rangle.
\]

Expanding this inner product and using \eqref{pncn} and \eqref{pnsn}, we deduce

\begin{equation}
\label{pndn}
\begin{aligned}
D(k+1)=D(k)+2(k+1)c_{k+1}\langle p_k',x^k\rangle+\frac{(k+1)^2c_{k+1}^2}{2k+1}=\\
=D(k)+(k+1)c_{k+1}(2s_k-c_k)+\frac{(k+1)^2c_{k+1}^2}{2k+1}.
\end{aligned}
\end{equation}

Let us note that by AM-GM inequality we have $-c_kc_{k+1}\leq c_k^2/2+c_{k+1}^2/2$. Also, $ks_k^2+2(k+1)c_{k+1}s_k=(k+1)((s_k+c_{k+1})^2-c_{k+1}^2)-s_k^2\leq (k+1)s_{k+1}^2-(k+1)c_{k+1}^2.$ Therefore, as $\frac{(k+1)^2}{2k+1}\leq k+1$, we obtain

\[D(k+1)\leq D(k)+(k+1)s_{k+1}^2+(k+1)c_k^2/2+(k+1)c_{k+1}^2/2-ks_k^2.
\]

Next, by the inductive assumption,

\[D(k)-ks_k^2\leq (k+1)c_k^2/2+\sum_{0\leq m\leq k-1}(m+1)c_m^2.
\]

Hence, from the previous inequality we see that

\[D(k+1)\leq (k+1)s_{k+1}^2+(k+1)c_{k+1}^2/2+(k+1)c_k^2/2+(k+1)c_k^2/2+\sum_{0\leq m\leq k-1}(m+1)c_m^2\leq\]
\[\leq (k+1)s_{k+1}^2+(k+2)c_{k+1}^2/2+\sum_{0\leq m\leq k}(m+1)c_m^2,
\]

which completes the proof.
\end{proof}
\begin{remark}
From the recurrence relation \eqref{pndn} one can deduce a more precise formula for $D(k)$, but it is also more unwieldy and is not very helpful for our purposes.
\end{remark}

Lemmas \ref{lemma1},\ref{lemma2} and \ref{lemma3} allow us to prove Theorem \ref{32estimate}:

\begin{proof}[of Theorem \ref{32estimate}]
From the Lemma \ref{lemma2} we get

\[(2n+3)s_n^2\leq D(n).
\]
Lemma \ref{lemma3} then implies that
\begin{equation}
\label{partialbound}
(n+3)s_n^2\leq \frac{n+1}{2}c_n^2+\sum_{0\leq k\leq n-1}(k+1)c_k^2.
\end{equation}
Note that this inequality is far superior to the trivial AM-GM bound 
\[
s_n^2=(c_0+\ldots+c_n)^2\leq (n+1)c_0^2+(n+1)c_1^2+\ldots+(n+1)c_n^2,
\]

which suggests that the sequence $(c_n)$ oscillates enough to cause some nontrivial cancellation in partial sums.

From the Lemma \ref{lemma3} and \eqref{partialbound} we deduce
\[
D(n)\leq (n+3)s_n^2+\frac{(n+1)}{2}c_n^2+\sum_{0\leq k\leq n-1}(k+1)c_k^2\leq (n+1)c_n^2+2\sum_{0\leq k\leq n-1}(k+1)c_k^2.
\]

Next, Lemma \ref{lemma1} gives us the following estimate:

\[
n^3c_n^2\leq D(n)\leq (n+1)c_n^2+2\sum_{0\leq k\leq n-1}(k+1)c_k^2.
\]

Thus,

\[
(n^3-n-1)c_n^2\leq 2\sum_{0\leq k\leq n-1}(k+1)c_k^2.
\]

For $n\geq 2$ we have $n^3-n-1\geq 0.5n^3$, therefore for $n\geq 2$

\begin{equation}
\label{themostimportantinequality}
    c_n^2 \leq \frac{4}{n^3}\sum_{0\leq k\leq n-1}(k+1)c_k^2.
\end{equation}

We will conclude our proof by the repeated application of the inequality \eqref{themostimportantinequality}. From \eqref{normp} we deduce that $c_n^2\leq 2n+1$ for all $n$. Therefore, for all $n\geq 2$

\[
c_n^2\leq \frac{4}{n^3}\sum_{0\leq k\leq n-1}(k+1)(2k+1).
\]

All summands on the right-hand side are $\leq 2n^2$, so for all $n\geq 2$ we have $c_n^2\leq 8$. This inequality is also true for $n=0$ or $1$, as $c_0^2<c_1^2=9/4<8$. Therefore,

\begin{equation}
\label{prac}
c_n^2\leq \frac{4}{n^3}\sum_{0\leq k\leq n-1}8(k+1)\leq \frac{32}{n}.
\end{equation}

Last inequality is true because every summand is $\leq 8n$. Applying the inequality \eqref{themostimportantinequality} one more time we get for all $n\geq 2$

\[c_n^2\leq \frac{4}{n^3}\left(1+\sum_{1\leq k\leq n-1}\frac{32(k+1)}{k}\right).\]

For all $k\geq 1$ we have $\frac{k+1}{k}\leq 2$, so

\[c_n^2\leq \frac{128}{n^2}\leq \frac{512}{(n+1)^2}.\]

Consequently, we have

\[c_n^2\leq \frac{4}{n^3}\sum_{0\leq k\leq n-1}\frac{512(k+1)}{(k+1)^2}=\frac{2048H_n}{n^3},\]

where $H_n=1+1/2+\ldots+1/n$ is the $n$-th harmonic number, where $H_0 = 0$. Applying this bound we compute

\[c_n^2 \leq \frac{4}{n^3}\sum_{k=1}^{+\infty}\frac{2048(k+1)H_k}{k^3}=\frac{16384\zeta(3)+4+1024\pi^4/9}{n^3}<\frac{30782}{n^3},\]

which completes the proof.
\end{proof}

\begin{remark}
Of course, the constant $30782$ in the resulting inequality is not optimal. Computations also suggest that the exponent $3/2$ is not optimal either. Let us define
\[
\delta=\limsup_{n\to \infty}\frac{\ln |c_n|}{\ln n}.
\]

Theorem \ref{32estimate} implies that $\delta\leq -3/2$. We think that $\delta>-\infty$. A.V. Ustinov conjectured that $\delta=-7/3$ and this is supported by calculations. For example, $\ln c_{5555}\approx(-7/3+0.00017)\ln 5555$.
\end{remark}

\begin{remark}
Due to the formula \eqref{normp}, the sequence $||p_n||^2$ is nonnegative and nonincreasing. Therefore, there is a limit $K:=\lim\limits_{n\to +\infty}||p_n||^2.$
The inequality \eqref{prac} can be used to compute $K$ to arbitrary precision. It allows us to show that $K\approx 0.239037$. No other expressions for $K$ than $K=1-c_1^2/3-c_2^2/5-\ldots$ is known yet.
\end{remark}

\section{On arithmetic properties of \texorpdfstring{$c_n$}{cn}}
%The following proposition serves as a very weak analogue of von Staudt-Clausen theorem for the Bernoulli numbers.
In this section we compute 2-adic norm of $c_n$ for every $n$ and prove that $c_n$ is always nonzero as an easy consequence.
\begin{prop}
\label{powersoftwo}
Denote $c_n = \dfrac{p_n}{q_n}$, where $q_n > 0$ and $(p_n,q_n)=1$. Then 
\[
q_n = 2^{2n - b(n)}r_n,
\]
where $r_n$ is odd, and $b(n)$ equals the number of non-zero digits in the binary expansion of $n$. In other words, the 2-adic norm of $c_n$ equals $|c_n|_2 = 2^{2n - b(n)}$. In particular, $c_n \ne 0$ for all $n \ge 0$.
\end{prop}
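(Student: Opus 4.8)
The plan is to track the $2$-adic valuation of $c_n$ through the Cramer's rule determinant formula \eqref{determinant}, namely $c_n = (-1)^n (2n+1)!! \det(A_n)$, where $(A_n)_{ij} = \frac{1}{i+j}$ for $j-i\le 1$ and $0$ otherwise. Since $(2n+1)!!$ is odd, we have $|c_n|_2 = |\det(A_n)|_2$, so everything reduces to computing the $2$-adic valuation of the determinant of this almost-lower-triangular matrix. First I would clear denominators: write $\det(A_n) = N_n / L_n$ where $L_n$ is an explicit product of the entries' denominators along a suitable expansion, and $N_n$ is an integer determinant; more efficiently, I would expand $\det(A_n)$ directly using the permutation sum and exploit the sparsity — because $(A_n)_{ij}=0$ unless $j\le i+1$, the only permutations $\sigma$ contributing are those with $\sigma(i)\le i+1$ for all $i$, which forces a very rigid combinatorial structure (each such $\sigma$ is a product of "descents by blocks": $\sigma$ is determined by choosing a subset of positions where it jumps up by one, and these are exactly the permutations counted in the standard way by Fibonacci-type recursions). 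So $\det(A_n) = \sum_{\sigma} \operatorname{sgn}(\sigma)\prod_{i=1}^n \frac{1}{i+\sigma(i)}$, the sum being over this restricted family.

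The key step is then a careful $2$-adic analysis of this sum. I would conjecture (and verify for small $n$) an exact closed form: guided by the claim $|c_n|_2 = 2^{2n-b(n)}$, I expect $\det(A_n)$ to have a clean product formula, something like $\det(A_n) = \pm \frac{\prod (\text{odd stuff})}{2^{2n-b(n)}\cdot(\text{odd stuff})}$, and the natural candidate is that the "main diagonal" permutation $\sigma = \mathrm{id}$, contributing $\prod_{i=1}^n \frac{1}{2i}$, is $2$-adically dominant. Indeed $v_2\!\left(\prod_{i=1}^n \frac{1}{2i}\right) = -\bigl(n + v_2(n!)\bigr) = -(n + n - b(n)) = -(2n - b(n))$ by Legendre's formula, which matches the target exactly. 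So the heart of the proof is to show that every other contributing permutation gives a term of strictly smaller $2$-adic valuation (i.e. strictly larger $v_2$), so that no cancellation can occur at the extremal level and $v_2(\det(A_n)) = -(2n-b(n))$ precisely. Concretely, if $\sigma\ne\mathrm{id}$ is in our family, it differs from the identity by "swapping" certain adjacent-ish blocks; I would compare $v_2\!\bigl(\prod_i (i+\sigma(i))\bigr)$ with $v_2\!\bigl(\prod_i 2i\bigr) = v_2((2n)!!)=n+v_2(n!)$ and show the former is strictly smaller, using the structure of $\sigma$ (each nontrivial $\sigma$ replaces some pairs $\{2i, 2(i{+}1)\}$ in the denominator-product by pairs $\{i+j\}$ with the same total but different $2$-adic content, and a parity/carry argument shows this strictly decreases $v_2$). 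Once the identity term dominates strictly, $|c_n|_2 = 2^{2n-b(n)}$ follows, and $c_n\ne 0$ is immediate since a nonzero $2$-adic norm certifies nonvanishing.

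The main obstacle I anticipate is precisely the combinatorial/$2$-adic comparison in the last step: proving that \emph{every} non-identity permutation in the restricted family is $2$-adically subdominant. The restricted permutations are numerous (their count grows like a Fibonacci number), and while each is structurally simple, organizing a uniform inequality $v_2\bigl(\prod_i(i+\sigma(i))\bigr) > v_2\bigl(\prod_i 2i\bigr)$ over all of them — and ruling out the possibility that several subdominant terms conspire, which they cannot if the dominant one is strictly alone but must still be checked — requires some care. A cleaner route around this, which I would pursue in parallel, is to find a recursion for $\det(A_n)$ directly from the block structure (expanding along the last row or column, which has only two nonzero entries $\frac{1}{2n}$ and $\frac{1}{2n-1}$, yields $\det(A_n) = \frac{1}{2n}\det(A_{n-1}) - \frac{1}{2n-1}\det(A_{n-1}')$ for a related minor $A_{n-1}'$), and then prove the $2$-adic statement by induction on $n$, splitting into the cases $n$ even and $n$ odd to handle how $b(n)$ changes. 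The induction should make the carry-counting in Legendre's formula transparent, since $b(2m)=b(m)$ and $b(2m+1)=b(m)+1$ align perfectly with how the factor $2n$ versus $2n-1$ enters the recursion.
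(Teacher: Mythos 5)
Your proposal follows essentially the same route as the paper: reduce to the restricted permutation sum from the determinant formula, observe that the identity permutation contributes $1/(2^n n!)$ with $2$-adic norm $2^{2n-b(n)}$ by Legendre's formula, show every other admissible permutation is strictly $2$-adically subdominant, and conclude by ultrametricity (which, as you note, automatically rules out any conspiracy among subdominant terms). The paper makes your ``parity/carry argument'' precise via a small lemma that every $\sigma$ with $\sigma(i)\le i+1$ factors into cycles on consecutive blocks $(a,a+1,\dots,a+k)$, whose denominator product is odd except for the single factor $2a+k$ --- this is the one detail you would still need to write out, and it is exactly the block structure you already identified.
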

\begin{proof}
First of all, let us prove a simple lemma.
\begin{lemma}
\label{permut}
Let $\sigma \in S_n$ with $\sigma(i) \le i+1$. Then $\sigma$ can be decomposed as a product of long cycles as follows:
\[
\sigma = \prod_{i=1}^{m-1} (k_i \: k_i + 1 \dots k_{i+1}-1)
\]
for some $1 \le k_1 < \dots < k_m \le n$.
\end{lemma}
\begin{proof}
    Notice that if $\sigma = (a_1 \dots a_k)$ is a long cycle itself, then the statement of the lemma is fairly obvious, because then $a_{i+1} = a_i + 1$ for all $1 \le i \le k$. However, every permutation admits a decomposition as a product of pairwise nonintersecting long cycles, which satisfy the conditions of the Lemma as well. This is because if the product $\sigma_1\sigma_2$ satisfies $\sigma_1\sigma_2(i)\leq i+1$ for all $i$ and $\sigma_1$ and $\sigma_2$ have nonintersecting supports, then $\sigma_j(i)\leq i+1$ holds for $j=1,2$ and all $i$.
\end{proof}
Let us look at the determinant relation \eqref{determinant}. It implies that for every $n > 0$ we have
\begin{equation}
    \label{permsum}
    c_n = (-1)^n (2n+1)!! \sum_{\substack{\sigma \in S_n \\ \sigma(i) \le i+1}} (-1)^\sigma \dfrac{1}{1+\sigma(1)} \dots \dfrac{1}{n + \sigma(n)}.
\end{equation}
The double factorial is odd, therefore,
\[
|c_n|_2 = \left| \sum_{\substack{\sigma \in S^n \\ \sigma(i) \le i+1}} (-1)^\sigma \dfrac{1}{1+\sigma(1)} \dots \dfrac{1}{n + \sigma(n)} \right|_2.
\]

Now we apply the Lemma \ref{permut} and notice that if $(a, a+1, \dots, a+k)$ is a long subcycle of $\sigma$ for some $k > 0$, then
\[
\left| \dfrac{1}{a + a+1} \dots \dfrac{1}{a + k-1 + a+k} \cdot \dfrac{1}{2a + k} \right|_2  = \left| \dfrac{1}{2a+k} \right|_2,
\]
because the other denominators are odd. Now we claim that
\[
\left| \dfrac{1}{2a+k} \right|_2 < \left| \dfrac{1}{2a} \dots \dfrac{1}{2(a+k-1)} \cdot \dfrac{1}{2(a + k)} \right|_2 = 2^{k+1} \left| \dfrac{(a-1)!}{(a+k)!} \right|_2.
\]
where the right product corresponds to the trivial permutation. 

If $k$ is odd, then $\left| \dfrac{1}{2a+k} \right|_2 = 1$, if $k$ is even, then $(2a + k) | 2a \cdot \dots \cdot 2(a+k-1) 2(a + k)$.
Therefore, for any nontrival permutation $\sigma\in S_n$ with $\sigma(i)\leq i+1$ for all $i$ we have

\[
\left|\frac{1}{1+\sigma(1)}\ldots\frac{1}{n+\sigma(n)}\right|_2<\left|\frac{1}{2^n n!}\right|_2=2^{2n-b(n)}.
\]
In other words, the maximum is achieved only for the trivial permutation.
Now, applying the ultrametric property of 2-adic norm we get
\[
\left|c_n-\frac{1}{2^n n!}\right|_2 \leq \max_{\substack{\sigma \in S^n\backslash\{id\} \\ \sigma(i) \le i+1}} \left| \dfrac{1}{1+\sigma(1)} \dots \dfrac{1}{n + \sigma(n)} \right|_2< \left| \dfrac{1}{2} \dots \dfrac{1}{2(n-1)} \dfrac{1}{2n} \right|_2 = 2^{2n - b(n)}.
\]

Applying ultrametricity once more, we get

\[
\left|c_n\right|_2=\max\left(\left|\frac{1}{2^n n!}\right|_2,\left|c_n-\frac{1}{2^n n!}\right|_2\right)=2^{2n-b(n)},
\]

which completes the proof.
And as the $2$-adic norm of $c_n$ is nonzero for every $n$, $c_n \ne 0$. 
\end{proof}
Furthermore, from the determinant formula \eqref{determinant} it is easy to see that $\frac{c_n(2n)!}{(2n+1)!!}$ is an integer. The fact that it is nonzero yields the following estimate

\[
|c_n|\geq \frac{(2n+1)!!}{(2n)!}.
\]

This lower bound is rather weak, as it decreases super-exponentially. The reasonable conjecture is that $|c_n|$ admits a polynomially decreasing lower bound.

\section{Sign changes and Dirichet series of \texorpdfstring{$c_n$}{cn}}
Here we will formulate a conjecture regrading the sign changes of $c_n$.

\begin{definition}
Let $(a_n)_{n \ge 0}$ be a sequence of real numbers. Then we say that $(a_n)$ changes sign at $N > 0$, if $a_N a_{N+1} < 0$. 
\end{definition}
%\begin{remark}
%If $a_n = 0$ for some $n > 0$, then $a_n$ \textbf{does not} change sign at $n$.
%\end{remark}

\begin{conjecture}
\label{signchanges}
The sequence $(c_n)_{n \ge 0}$ changes sign infinitely often.
\end{conjecture}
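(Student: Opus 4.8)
I would argue by contradiction. Suppose the conjecture fails; since $c_n\neq 0$ for all $n$ (Proposition \ref{powersoftwo}), the sequence is then eventually of constant sign, say $c_n>0$ for all $n\geq N$ (replacing $c_n$ by $-c_n$ if necessary, which does not affect sign changes). First I would record the analytic input already available. By Theorem \ref{32estimate} we have $\sum_n|c_n|<\infty$, so $F(x)=\sum_{k\geq 0}c_kx^k$ is continuous on $[0,1]$ with $F(0)=1$; from $(2n+3)s_n^2\leq D(n)$ together with the bound $D(n)=O(1)$ obtained inside that proof we get $s_n\to 0$, hence $F(1)=0$, and also that $\|p_n'\|_{L^2}$ is bounded, so $p_n'\rightharpoonup F'$ weakly in $L^2$ and $F\in H^1([0,1])\hookrightarrow C^{0,1/2}([0,1])$; in particular $|F(x)|\ll(1-x)^{1/2}$ near $1$. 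Under our assumption $g(x):=\sum_{k\geq N}c_kx^k$ is non-negative, non-decreasing and absolutely monotone on $[0,1)$, and $s_n$ strictly increases to $0$ for $n\geq N-1$, so $s_n<0$ there.

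The elementary part is to play the recurrence $\sum_{k=0}^{n}\tfrac{c_k}{n+k+1}=0$ ($n\geq 1$) against positivity of the tail. Splitting at $k=N$ gives $\sum_{k=N}^{n}\tfrac{c_k}{n+k+1}=-\sum_{k<N}\tfrac{c_k}{n+k+1}$ with non-negative left side; multiplying by $n^{j+1}$ and letting $n\to\infty$ (legitimate once $\sum_k(k+1)^{j}|c_k|<\infty$, which is why the bound $|c_n|\ll n^{-3/2}$ is not yet enough beyond $j=0$) forces, inductively, $\sum_{k\geq 0}(k+1)^{j}c_k=0$, i.e. $\sum_{k\geq N}(k+1)^{j}c_k=-\sum_{k<N}(k+1)^{j}c_k$, which pins down signs of moments of the initial block $c_0,\dots,c_{N-1}$. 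Similarly, letting $t\to1^-$ in \eqref{integraleq} (legitimate by the Hölder bound on $F$) gives $\int_0^1\tfrac{F(x^2)}{1-x}\,dx=1$, i.e. $\sum_{m\geq 0}s_m\bigl(\tfrac{1}{2m+1}+\tfrac{1}{2m+2}\bigr)=1$, whose tail is again of one sign under our assumption.

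For the substantive part I would pass to $L(s)=\sum_{n\geq 1}c_nn^{-s}$, convergent for $\mathrm{Re}\,s>-\tfrac12$, and use Landau's theorem: as $\sum_{n\geq N}c_nn^{-s}$ has non-negative coefficients, its abscissa of convergence $\sigma_c$ is a singularity of $L$. I would try to contradict this by continuing $L$ leftward: from $L(s)\Gamma(s)=\int_0^{\infty}(F(e^{-u})-1)u^{s-1}\,du$ and the Taylor expansion of $F(e^{-u})-1$ at $u=0$, the poles at $s=0,-1,-2,\dots$ are cancelled by those of $\Gamma(s)$, so $L$ continues as far left as $F$ is smooth at $x=1$; quantitatively, boundedness of the higher energies $D_j(n)=\int_0^1(p_n^{(j)})^2$ for $j\leq J$ would give $F\in H^{J}$ near $1$ and continuation of $L$ to $\mathrm{Re}\,s>-(J-\tfrac12)$, contradicting the Landau singularity at $\sigma_c$ once $J$ is large enough relative to the location of $\sigma_c$.

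The main obstacle is that closing this loop requires two ingredients we do not have: a polynomial lower bound $|c_n|\gg n^{-A}$, so that $\sigma_c\geq 1-A$ is a definite finite number (the unconditional bound $|c_n|\geq(2n+1)!!/(2n)!$ from Section 4 is hopelessly weak for this), and control of the higher energies $D_j(n)$ with $j\geq 2$ to push the continuation of $L$ past $1-A$. Worse, a careful Tauberian analysis suggests that when $c_n\asymp n^{-\alpha}$ the leading genuine singularity of $L$ actually sits at $s=1-\alpha=\sigma_c$, so the scheme produces a contradiction only if one additionally controls the \emph{sign} of the associated residue (equivalently, of the leading non-smooth term of $F$ at $x=1$ coming from the tail) and shows it is incompatible with the integral equation. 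Since every elementary identity in the previous paragraph is a single linear constraint that $c_0,\dots,c_{N-1}$ can absorb, this residue/sign step is where the real difficulty lies; I would therefore first try to establish the polynomial lower bound on $|c_n|$, most plausibly by sharpening the permutation sum \eqref{permsum} for the determinant \eqref{determinant}.
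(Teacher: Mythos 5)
The statement you are asked about is not a theorem of the paper: it is Conjecture \ref{signchanges}, for which the authors offer only numerical evidence (sign changes observed at $n=0,1,27,533,10457,\dots$) and no proof. So there is no argument in the paper to compare yours against, and your text should be judged purely as a proposed proof. As such, it is not one, and you say so yourself: the Landau-type scheme in your third paragraph requires (i) a polynomial lower bound $|c_n|\gg n^{-A}$ to place the abscissa of convergence $\sigma_c$ of the tail Dirichlet series at a finite point, and (ii) control of the higher energies $D_j(n)$ to continue $L(s)$ holomorphically past that point. Neither is available --- the only unconditional lower bound is the super-exponentially small $|c_n|\geq (2n+1)!!/(2n)!$ from Section 4 --- and, as you correctly observe, even with both ingredients the contradiction does not follow, because an eventually positive sequence with genuine polynomial decay produces a \emph{bona fide} singularity at $\sigma_c$ (morally $L(s)\approx A\,\zeta(s+\alpha)$ near $s=1-\alpha$), so Landau's theorem is satisfied rather than violated. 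The burden then shifts entirely to the unproved residue/sign analysis you defer to the end.

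The preparatory material is mostly sound and worth keeping in mind for a future attack: $\sum_n|c_n|<\infty$ and $F(1)=\lim s_n=0$ follow from Theorem \ref{32estimate} and the argument for $C(0)=-1$; the bound $D(n)=O(1)$ plus weak compactness does give $F\in H^1\hookrightarrow C^{0,1/2}$ and hence $|F(x)|\ll(1-x)^{1/2}$; and the limit $t\to 1^-$ in \eqref{integraleq} does yield $\sum_{m\geq 0}s_m\bigl(\tfrac{1}{2m+1}+\tfrac{1}{2m+2}\bigr)=1$. But the higher-moment identities $\sum_k(k+1)^jc_k=0$ for $j\geq 1$ are not established (you concede the required absolute convergence is unknown, and the known bound $|ns_n|\ll n^{1/2}$ does not force the $j=1$ moment to exist, let alone vanish), and every identity you do obtain is, as you note, a single linear constraint that the finite initial block $c_0,\dots,c_{N-1}$ can absorb. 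The net effect is a well-informed survey of obstructions, not a proof; the conjecture remains open, and the most promising concrete step you identify --- a polynomial lower bound on $|c_n|$ via the permutation sum \eqref{permsum} --- is itself flagged as an open problem in the paper.
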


Numerical experiments show that among the first 20000 terms of $(c_n)$  sign changes occur at the following values of $n$:

\[
0,1,27,533,10457,\dots
\]
This suggests that if the second part of the Conjecture \ref{signchanges} holds, then the sign changes are exponentially rare, for example, notice that
\[
\dfrac{27}{1} = 27, \quad \frac{533}{27} = 19.7(407), \quad \frac{10457}{533} \approx 19.61.
\]
Let us denote the $(n+1)$-th change of sign of our sequence by $t_n$.
We conjecture that the ratio $\dfrac{t_{n+1}}{t_n}$ approaches some limit as $n$ goes to infinity.

So, it seems plausible that the sequence $c_n$ oscillates with a certain sort of ``logarithmic phase''. More precisely, we formulate the following:

\begin{conjecture}
\label{preciseasympt}
There are real constants $\delta, \varphi,A$ and $P$ such that for all $n>0$ we have
\begin{equation}
\label{asymp}
c_n=\frac{A}{n^\delta}\sin(P\ln n+\varphi)+O(n^{-\delta-\varepsilon}),
\end{equation}

where $\varepsilon>0$.
\end{conjecture}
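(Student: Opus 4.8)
## Proof strategy for Conjecture \ref{preciseasympt}

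The plan is to extract the oscillatory asymptotics from a singularity analysis of the generating function $F(t)$ near $t=1$, rather than working directly with the recurrence for $c_n$. The starting point is the integral equation \eqref{integraleq}: $\int_0^1 \frac{F(xt^2)}{1-tx}\,dx = 1$ for $0\le t<1$. The first step is to turn this into a functional equation of Mellin type. Setting $u = tx$ and writing the condition in terms of the function $G(t) := F(t^2)$, one gets a relation of the shape $\int_0^t \frac{G(u/t)}{1-u}\,du = t$, i.e. a convolution on the multiplicative group $(0,1)$. Taking the Mellin transform $\widetilde{F}(s) = \int_0^1 F(t) t^{s-1}\,dt$ (which converges for $\mathrm{Re}\,s$ large since $F$ is bounded near $1$ by Theorem \ref{32estimate}) should convert \eqref{integraleq} into an explicit identity expressing $\widetilde{F}(s)$ as a ratio of Gamma functions or Beta-type factors times an entire correction. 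Concretely I expect something like $\widetilde{F}(2s)\,B(s, \text{something}) = (\text{elementary})$, which pins down $\widetilde{F}$ up to poles.

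The second step is to locate the poles of $\widetilde{F}(s)$ in the left half-plane. The form \eqref{asymp} with $c_n \sim \frac{A}{2i}\left(e^{i\varphi} n^{-\delta+iP} - e^{-i\varphi} n^{-\delta-iP}\right)$ is exactly what a conjugate pair of simple poles of $\widetilde{F}(s)$ at $s = -\delta \pm iP$ produces under Mellin inversion, with the error term $O(n^{-\delta-\varepsilon})$ coming from the next pole (or a contour shift). So the crux is: the Gamma/Beta factors in the functional equation for $\widetilde{F}$ force a denominator whose zeros are a discrete set of complex points, and one must show the rightmost such zeros form a genuine complex-conjugate pair off the real axis (this is where the ``logarithmic phase'' $P\ln n$ is born — a pole at complex height $P$). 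The constants $A, \varphi$ are then residues, $\delta = -\mathrm{Re}(s_0)$, $P = \mathrm{Im}(s_0)$ for the rightmost pole $s_0$.

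The third step is the quantitative Mellin inversion: write $c_n = \frac{1}{2n}\sum \mathrm{Res} + \frac{1}{2\pi i}\int_{(\sigma)} \widetilde{F}(2s) n^{-2s}\,ds$ along a vertical line to the left of $s_0$ but right of the next singularity, and bound the remaining integral by $O(n^{-\delta-\varepsilon})$. This requires polynomial-growth control of $\widetilde{F}(s)$ on vertical lines (to justify moving the contour and to bound the tail), which should follow from Stirling's formula applied to the Gamma factors together with the a priori bound $c_n = O(n^{-3/2})$ from Theorem \ref{32estimate} controlling the ``entire part'' of $\widetilde{F}$.

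The main obstacle, I expect, is the second step: proving that the functional equation really does produce poles of $\widetilde{F}$ off the real axis and not merely on it, and controlling the ``entire correction factor'' well enough that it does not cancel the expected poles or introduce spurious closer ones. The integral equation \eqref{integraleq} only determines $F$ together with the (unknown) analytic behavior of the correction term, so without extra input one cannot rule out that the true singularity structure is different from the conjectured one — indeed the authors themselves flag that they cannot solve \eqref{integraleq} in closed form. A secondary difficulty is that even granting the pole structure, identifying $\delta$ with Ustinov's conjectured value $7/3$ would require solving a transcendental equation for the zeros of the Beta-factor denominator, and it is not clear a priori that its rightmost zero has real part exactly $-7/3$; this numerical coincidence may itself be evidence that the correct analytic object is subtler than a bare ratio of Gamma functions.
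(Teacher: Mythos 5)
This statement is Conjecture~\ref{preciseasympt}: the paper does not prove it and offers only numerical evidence, so there is no proof of record to compare yours against. Your proposal is a strategy sketch rather than a proof, and you are right to flag that it is incomplete; but the gap appears earlier than you place it. Your first step already fails: the integral equation \eqref{integraleq} is not a multiplicative convolution on $(0,1)$. With $u=tx$ the argument of $F$ is $xt^2=ut$ (not $u/t$ as you wrote), and the equation becomes $\int_0^{t^2}\frac{F(v)}{t-v}\,dv=t$, where the mismatch between the upper limit $t^2$ and the kernel variable $t$ (equivalently, in the paper's Lemma~\ref{funceq}, the mismatch between the weight $t^n$ and the index $2n$ in $G_{2n}(t)$) destroys the convolution structure. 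Taking a Mellin transform then produces $\int_0^1 F(v)\bigl(\int_{\sqrt v}^{1}\frac{t^{s-1}}{t-v}\,dt\bigr)dv$, whose inner integral does not factor as a function of $s$ times a power of $v$; no Beta/Gamma quotient for $\widetilde F(s)$ drops out, so there is no explicit denominator whose complex zeros could be located. Without that, steps two and three have nothing to act on.

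It is also worth noting that the paper's own Proposition in Section~5 runs in the opposite direction to your plan: it \emph{assumes} Conjecture~\ref{preciseasympt} and deduces that $C(s)$ has a conjugate pair of poles at $1-\delta\pm iP$. Your proposal needs the converse — an unconditional meromorphic continuation of the Mellin/Dirichlet transform with identified rightmost poles — and that is exactly the information nobody has. The only unconditional analytic inputs available are Theorem~\ref{32estimate} ($|c_n|\le Cn^{-3/2}$, hence absolute convergence of $C(s)$ for $\mathrm{Re}\,s>-1/2$) and the unsolved integral equation; neither yields continuation past the abscissa of convergence. So the proposal is a reasonable heuristic explanation of \emph{why} the conjectured shape \eqref{asymp} is natural (a conjugate pair of poles would produce precisely the $n^{-\delta}\sin(P\ln n+\varphi)$ oscillation), but it does not close, and the obstruction is structural, not merely technical.
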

%\begin{remark}
%Our guess from the Remark 3 implies that $\delta=7/3$. Also, it is very easy to see that if Conjecture \ref{preciseasympt} holds then $\ln t_n\sim \cfrac{\pi n}{P}$. Therefore, it is natural to expect that $P_n:=\frac{\ln t_n}{\pi n}$ converges to some $P>0$, and one can check that $P_1\approx 1.049, P_2\approx 0.999, P_3\approx 0.982$, so it looks like $P=1$.
%\end{remark}

\subsection{Dirichlet series of \texorpdfstring{$c_n$}{cn}}

Suppose that $C(s)$ is a Dirichlet generating function of $c_n$, that is

\[
C(s)=\sum_{n=1}^\infty \frac{c_n}{n^s}.
\]

This Dirichlet generating series is rather hard to work with: note that the whole Section 3 of current paper is devoted to the proof of (the slightly stronger form of) the following result:
\begin{theorem}
The series $C(s)$ converges absolutely for any complex $s$ with $\mathrm{Re}\,s>-1/2$. Also, $C(0)=-1$.
\end{theorem}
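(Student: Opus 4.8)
The plan is to read off both assertions from the work already done in Section~3. For the absolute convergence I would simply invoke Theorem~\ref{32estimate}: it gives a constant $C$ with $|c_n|\le Cn^{-3/2}$ for all $n\ge 1$, whence for $s=\sigma+it$ with $\sigma=\mathrm{Re}\,s>-1/2$ we have
\[
\sum_{n\ge 1}\left|\frac{c_n}{n^{s}}\right|=\sum_{n\ge 1}\frac{|c_n|}{n^{\sigma}}\le C\sum_{n\ge 1}\frac{1}{n^{3/2+\sigma}}<\infty,
\]
because $3/2+\sigma>1$. That is the whole argument for the first claim; the strict inequality $\sigma>-1/2$ is exactly the range forced by an $n^{-3/2}$ bound, so no improvement is possible without strengthening Theorem~\ref{32estimate} (if $|c_n|$ decayed like $n^{-7/3}$, as conjectured by Ustinov, one would get absolute convergence for $\mathrm{Re}\,s>-4/3$).

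For the value $C(0)$ I would rewrite $C(0)=\sum_{n\ge 1}c_n=\big(\lim_{n\to\infty}s_n\big)-c_0$, where $s_n=c_0+\dots+c_n=p_n(1)$ is the partial-sum sequence of Section~3 and the limit exists since $\sum_n|c_n|<\infty$; as $c_0=1$, the claim $C(0)=-1$ is equivalent to $s_n\to 0$. To prove $s_n\to 0$ I would combine Lemma~\ref{lemma2}, which gives $s_n^2\le D(n)/(2n+3)$ with $D(n)=\int_0^1 p_n'(x)^2\,dx$, with the boundedness of $D(n)$. The latter I would extract from the estimate $D(n)\le (n+1)c_n^2+2\sum_{0\le k\le n-1}(k+1)c_k^2$ obtained en route to Theorem~\ref{32estimate} (and ultimately from Lemma~\ref{lemma3}): feeding in $|c_k|\le Ck^{-3/2}$ makes $(n+1)c_n^2\to 0$ and renders $\sum_{k\ge 1}(k+1)c_k^2$ convergent, so $D(n)=O(1)$. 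Hence $s_n^2\le D(n)/(2n+3)=O(1/n)\to 0$, so $s_n\to 0$ and $C(0)=-1$.

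I do not expect a serious obstacle here: the statement is essentially a repackaging of Theorem~\ref{32estimate} together with the boundedness of $D(n)$, both already in hand. The one point deserving attention is that absolute convergence of $\sum c_n$ does not on its own identify the sum, so one genuinely needs the quantitative decay $s_n=O(n^{-1/2})$ furnished by Lemma~\ref{lemma2} rather than just the pointwise bound $|c_n|\le Cn^{-3/2}$. An alternative but more delicate route to $s_n\to 0$ would be to let $t\to 1^-$ in the integral equation \eqref{integraleq}, $\int_0^1\frac{F(xt^2)}{1-tx}\,dx=1$, and argue that finiteness of the right-hand side forces $F(1)=\lim_n s_n$ to vanish; this works morally but requires care with the singularity of the kernel at $x=1$, so I would keep the $D(n)$ argument as the main line.
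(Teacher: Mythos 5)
Your proposal is correct and follows essentially the same route as the paper: absolute convergence is read off from Theorem \ref{32estimate} by comparison with $\sum n^{-\sigma-3/2}$, and $C(0)=-1$ is obtained by combining Lemma \ref{lemma2} with Lemma \ref{lemma3} (via the bound $D(n)\le (n+1)c_n^2+2\sum_{k\le n-1}(k+1)c_k^2$ already derived in the proof of Theorem \ref{32estimate}) to get $s_n^2=O(1/n)$. The only cosmetic difference is that the paper cancels the $ns_n^2$ term across the two sides of $(2n+3)s_n^2\le D(n)\le ns_n^2+\cdots$ rather than first asserting $D(n)=O(1)$, but the content is identical.
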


\begin{proof}
Indeed, by Theorem \ref{32estimate} we have $|c_n|\leq \dfrac{C}{n^{3/2}}$ for some $C>0$. Therefore, if $s=\sigma+it$ with $\sigma, t\in \mathbb R$ we have for each summand of our series

\[ \left|\frac{c_n}{n^s}\right|\leq \frac{C}{n^{\sigma+3/2}},
\]

which provides convergence for all $\sigma>-1/2$ by the comparison with the series $\sum\limits_n n^{-\sigma-3/2}$, which is standard.

To prove the second part of our theorem, note that the Lemmas \ref{lemma2} and \ref{lemma3} give us the following result:

\[(2n+3)s_n^2\leq D(n) \leq ns_n^2+(n+1)c_n^2/2+\sum_{0\leq k\leq n-1}(k+1)c_k^2.
\]

Therefore,

\[s_n^2 \leq \frac{1}{n}\left((n+1)c_n^2/2+\sum_{0\leq k\leq n-1}(k+1)c_k^2\right).
\]

Applying Theorem \ref{32estimate}, we get

\[(n+1)c_n^2/2+\sum_{0\leq k\leq n-1}(k+1)c_k^2\leq C^2\left(\frac{n+1}{2n^3}+\sum_{1\leq k\leq n-1}\frac{k+1}{k^3}+1\right)\leq C^2 C_1,
\]

where $C_1=1+\sum\limits_{k\geq 1}\frac{k+1}{k^3}=1+\zeta(2)+\zeta(3)$. Therefore, $s_n^2\leq \frac{C_2}{n}$ for some $C_2>0$. Next, we have

\[C(0)=\sum_{n=1}^{+\infty} c_n=\lim_{n\to \infty} (c_1+\ldots+c_n)=\lim_{n\to \infty}(s_n-1)=\lim_{n\to \infty}(O(n^{-1/2})-1)=-1,
\]

which completes the proof.
\end{proof}

One of the simplest, but rather unexpected consequences of Conjecture \ref{preciseasympt} is the following statement:

\begin{prop}
If Conjecture \ref{preciseasympt} holds then the function $C(s)$ has a meromorphic extension to the half-plane $\mathrm{Re}\,s>1-\delta-\varepsilon$ with only simple poles in two points $s=s_0=1-\delta+iP$ and $s=\overline{s_0}=1-\delta-iP$.
\end{prop}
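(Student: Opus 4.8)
The plan is to feed the conjectured asymptotic \eqref{asymp} directly into the Dirichlet series and separate the contribution of the oscillating main term from that of the error term. Write $c_n = \frac{A}{n^{\delta}}\sin(P\ln n+\varphi) + r_n$, where $r_n = O(n^{-\delta-\varepsilon})$ by \eqref{asymp}. For the remainder set $R(s)=\sum_{n\geq 1} r_n n^{-s}$; since $|r_n n^{-s}| \ll n^{-\delta-\varepsilon-\mathrm{Re}\,s}$, this series converges absolutely and locally uniformly on the half-plane $\mathrm{Re}\,s>1-\delta-\varepsilon$ and hence defines a holomorphic function there. Consequently every singularity of the continuation of $C(s)$ into this half-plane must be carried by the main term.

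To analyze the main term I would use Euler's formula $\sin(P\ln n+\varphi)=\frac{1}{2i}\left(e^{i\varphi}n^{iP}-e^{-i\varphi}n^{-iP}\right)$. Substituting and recognizing the resulting Dirichlet series as shifts of $\zeta$ via $\sum_{n\geq 1}n^{-w}=\zeta(w)$, one obtains, for $\mathrm{Re}\,s>1-\delta$ (so that both shifted arguments have real part exceeding $1$, and all three series converge absolutely),
\[
C(s)=\frac{A e^{i\varphi}}{2i}\,\zeta(s+\delta-iP)-\frac{A e^{-i\varphi}}{2i}\,\zeta(s+\delta+iP)+R(s).
\]
Since $\zeta$ continues to a meromorphic function on all of $\mathbb{C}$ whose only singularity is a simple pole with residue $1$ at $w=1$, each of $\zeta(s+\delta\mp iP)$ is meromorphic on $\mathbb{C}$ with exactly one simple pole: at $s=s_0=1-\delta+iP$ for the first term, and at $s=\overline{s_0}=1-\delta-iP$ for the second. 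Combined with the holomorphy of $R(s)$ on $\mathrm{Re}\,s>1-\delta-\varepsilon$, this exhibits the claimed meromorphic extension of $C(s)$ to $\mathrm{Re}\,s>1-\delta-\varepsilon$ with at most simple poles at $s_0$ and $\overline{s_0}$ (both of which lie in this half-plane, as $\mathrm{Re}\,s_0=1-\delta>1-\delta-\varepsilon$), with residues $\frac{Ae^{i\varphi}}{2i}$ and $-\frac{Ae^{-i\varphi}}{2i}$ — complex conjugates of each other, as they must be since $C(s)$ is real on the real axis.

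No real obstacle stands in the way here; the argument is essentially a one-line manipulation together with the standard continuation of $\zeta$. The only points deserving a word of care are degeneracies hidden in the constants: the statement ``two simple poles'' tacitly assumes $P\neq 0$ (if $P=0$ the two candidate poles coincide and one is left with the single pole of $A\sin\varphi\cdot\zeta(s+\delta)$ at $s=1-\delta$, which vanishes entirely when $\sin\varphi=0$) and $A\neq 0$; under the natural reading of \eqref{asymp}, in which the main term genuinely oscillates, both hold and the poles are genuine. It is also worth noting that, since Theorem \ref{32estimate} forces $\delta\geq 3/2>1$, the poles $s_0,\overline{s_0}$ have negative real part, so this proposition would genuinely push $C(s)$ past the half-plane $\mathrm{Re}\,s>-1/2$ of absolute convergence.
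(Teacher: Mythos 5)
Your proposal is correct and follows essentially the same route as the paper: split off the oscillating main term (the paper calls the remainder $b_n$ and its Dirichlet series $B(s)$), show the remainder series is holomorphic on $\mathrm{Re}\,s>1-\delta-\varepsilon$, and express the main term via Euler's formula as a combination of $\zeta(s+\delta\mp iP)$, whose unique simple poles give $s_0$ and $\overline{s_0}$. Your added remarks on the degenerate cases $P=0$, $A=0$ and on the location of the poles relative to the abscissa $-1/2$ are sensible refinements not present in the paper, but the core argument is identical.
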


\begin{proof}
Indeed, let $b_n=c_n-\frac{A}{n^\delta}\sin(P\ln n+\varphi).$ Then by Conjecture \ref{preciseasympt} we have $|b_n|=O(n^{-\delta-\varepsilon})$, hence the series

\[B(s)=\sum_{n=1}^{+\infty} \frac{b_n}{n^s}
\]

converges absolutely and uniformly in every closed half-plane of the form $\mathrm{Re}\,s\geq 1-\delta-\varepsilon+\zeta$ with $\zeta>0$. Therefore, $B(s)$ is a holomorphic function in the open half-plane $\mathrm{Re}\,s>1-\delta-\varepsilon$. Now let us note that for $C(s)$ we have the following decomposition
\[C(s)=\sum_{n=1}^{+\infty} \frac{b_n+An^{-\delta}\sin(P\ln n+\varphi)}{n^s}=B(s)+\sum_{n=1}^{+\infty} \frac{An^{-\delta}\sin(P\ln n+\varphi)}{n^s}.
\]

We are left with the second series. To handle this, let us use the Euler's formula for sine and observe that

\[\frac{n^{-\delta}\sin(P\ln n+\varphi)}{n^s}=\frac{1}{2in^{\delta+s}}(\exp(iP\ln n+i\varphi)-\exp(-iP\ln n-i\varphi))=
\]
\[\frac{e^{i\varphi}n^{iP}-e^{-i\varphi}n^{-iP}}{2in^{\delta+s}}.
\]

From this formula we finally obtain

\[
C(s)=B(s)+\frac{e^{i\varphi}A}{2i}\zeta(\delta+s-iP)-\frac{e^{-i\varphi}A}{2i}\zeta(\delta+s+iP),
\]

which completes the proof of this proposition, as $B(s)$ is holomorphic inside the region $\mathrm{Re}\,s>-1/2-\varepsilon$ and the functions $\zeta(\delta+s\pm iP)$ have a meromorphic continuation the the whole complex plane with only simple poles at $s=1-\delta\mp iP$ (for some properties of $\zeta(s)$ see \cite{ivic},\cite{kar}).
\end{proof}

This proposition shows that Conjecture \ref{preciseasympt}, if true, would be a very remarkable property, as the Dirichlet generating function of a sequence with rather simple recurrent formula is not expected to have complex poles.

%\subsection{Small values of \texorpdfstring{$c_n$}{cn}}
%However, at this moment we do not know how to prove that every $c_n$ is non-zero. We think that an argument strong enough to prove this will yield some lower estimates on $|c_n|$. But there is another curious phenomenon we want to demonstrate: the absolute value of $c_n$ ``dips" precisely at the places where the sequence $(c_n)$ changes sign. By this we mean that if $c_n c_{n+1} < 0$, then $|c_{n+1}|$ is really small compared to $|c_n|$ or $|c_{n+2}|$.
%[This is unimaginably wrong. For example, one can compute that $\frac{c_{27}}{c_{26}}\approx -0.1745$, which fits your prediction, but $\frac{c_{533}}{c_{532}}\approx -2.0246$, which is nowhere near fits it. Also, there is A LOT MORE weird things going on with your claim. As far as I understand, to prove your claim we need to prove upper bounds for $|c_{n+1}|$ (this is rather easy, if you compile all the strange stuff we did before), but \begin{it}{l o w e r}\end{it} bounds for $|c_n|$ and $|c_{n+2}|$ (this is unachievable right now, as far as I know. If you were able to do this TELL ME THE PROOF PLEASE). Also, I promise to think about possible ways to establish the lower bounds for a next few days. Thank you for your contribution to this text and happy New Year!:3]

\section{Infinite series identities with harmonic numbers, $c_n$ and $\pi$}
%For a nonnegative integer $n$ let us define a \emph{harmonic number} $H_n$ as follows:
%\[
%H_n=1+\frac12+\ldots+\frac{1}{n}=\sum_{1\leq k\leq n}\frac{1}{k}.
%\]
%In particular, $H_0=0$, because an empty sum always equals $0$. 
Here we prove that some infinite family of series containig $H_n$, $c_n$ and (in one summand) $\pi$, can be evaluated in a closed form.

\begin{theorem}
\label{harmonicrelation}
Suppose that $r\geq 0$, $r\in \mathbb Z$. Define the sequence $\{h_r(n)\}_{n\geq 0}$ by the following formula:
\[
h_r(n)=\begin{cases} \frac{H_{2n}-H_{n+r}}{n-r} \text{ if } n\neq r\\
\frac{\pi^2}{6}-\frac{1}{1^2}-\ldots-\frac{1}{(2r)^2}=\frac{\pi^2}{6}-\sum\limits_{1\leq k\leq 2r} \frac{1}{k^2} \text{ if } n=r
\end{cases}
\]

Then for every $r$ we have

\begin{equation}
\label{hrseries}
\sum_{n=0}^{+\infty} c_n h_r(n)=\frac{1}{r+1}
\end{equation}
\end{theorem}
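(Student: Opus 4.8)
The plan is to linearize everything back to the defining recurrence \eqref{recurrent} via the representation
\[
h_r(n)=\sum_{j=0}^{\infty}\frac{1}{(2n+j+1)(n+r+j+1)},
\]
valid for all integers $n\ge 0$ and $r\ge 0$. For $n\ne r$ this is the classical partial-fraction/telescoping identity: from
\[
\frac{1}{(2n+j+1)(n+r+j+1)}=\frac{1}{n-r}\left(\frac{1}{n+r+j+1}-\frac{1}{2n+j+1}\right)
\]
and $\sum_{j=0}^{J}\bigl(\tfrac{1}{n+r+j+1}-\tfrac{1}{2n+j+1}\bigr)=H_{2n}-H_{n+r}+\bigl(H_{n+r+J+1}-H_{2n+J+1}\bigr)\to H_{2n}-H_{n+r}$ one gets $\frac{H_{2n}-H_{n+r}}{n-r}$; for $n=r$ the series degenerates to $\sum_{\ell>2r}\ell^{-2}=\frac{\pi^2}{6}-\sum_{k=1}^{2r}\frac{1}{k^2}$, matching the second branch of the definition of $h_r$. (One may alternatively derive this by writing $H_{2n}-H_{n+r}=\int_0^1\frac{x^{n+r}-x^{2n}}{1-x}\,dx$ and unfolding a double integral, but the elementary route above is shorter.) In particular every term of the series is positive, so $h_r(n)>0$ for all $n$.

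Substituting this representation into the left side of \eqref{hrseries} yields the double series $\sum_{n\ge 0}\sum_{j\ge 0}\frac{c_n}{(2n+j+1)(n+r+j+1)}$, and I would first check that it converges absolutely so that the order of summation may be switched freely. This is immediate: by Theorem \ref{32estimate} one has $|c_n|\le Cn^{-3/2}$, while for $n>r$ the bound $H_{2n}-H_{n+r}\le\frac{n-r}{n+r+1}$ gives $h_r(n)\le\frac{1}{n+1}$, and the finitely many terms with $n\le r$ contribute a finite amount; hence $\sum_n|c_n|h_r(n)<\infty$ and Fubini applies.

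The heart of the argument is a reindexing chosen so that \eqref{recurrent} appears verbatim. In $\sum_{n,j}\frac{c_n}{(2n+j+1)(n+r+j+1)}$ put $m=n+r+j+1$, so that $n+r+j+1=m$ and $2n+j+1=m+n-r$; for fixed $m$ the condition $j\ge 0$ reads $0\le n\le m-r-1$, and $m$ then ranges over $m\ge r+1$. Writing $N=m-r-1\ge 0$ we have $m=N+r+1$ and $m+n-r=N+n+1$, so
\[
\sum_{n=0}^{\infty}c_nh_r(n)=\sum_{N=0}^{\infty}\frac{1}{N+r+1}\sum_{n=0}^{N}\frac{c_n}{N+n+1}.
\]
By Proposition \ref{recurrent} the inner sum equals $\delta_{N0}$, so only the term $N=0$ survives and the total equals $\frac{1}{r+1}$, which is \eqref{hrseries}.

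I expect the genuine content to be the identification in the last step — that after the shift $m=n+r+j+1$ the denominator ``$2n+j+1$'' becomes exactly the ``$N+n+1$'' occurring in \eqref{recurrent} — together with the careful treatment of the diagonal case $n=r$, where the harmonic quotient degenerates and the $\pi^2/6$ term enters. The analytic ingredient (the upper bound of Theorem \ref{32estimate}) is routine and is used only to legitimize the interchange of the two summations.
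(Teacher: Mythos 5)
Your proof is correct, and it takes a genuinely more direct route than the paper. The paper first establishes Lemma \ref{funceq}, the generating-function identity $\sum_n c_n t^n G_{2n}(t)=1$ (itself deduced from the integral equation \eqref{integraleq}), then multiplies by $t^r$, integrates over $[0,1]$, and evaluates $\int_0^1 t^{n+r}G_{2n}(t)\,dt=h_r(n)$ by termwise integration and the same telescoping you use. Your series representation $h_r(n)=\sum_{j\ge 0}\frac{1}{(2n+j+1)(n+r+j+1)}$ is exactly what that integral computation produces, so the two arguments share their key identity; but you bypass the generating function and the integral equation entirely and instead reindex the double sum by $m=n+r+j+1$ so that Proposition \ref{recurrent} appears verbatim as the inner sum. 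This buys two things: the argument is self-contained modulo \eqref{recurrent} and Theorem \ref{32estimate}, and the interchange of summations is justified cleanly by positivity of the terms together with $h_r(n)\le\frac{1}{n+1}$ and $|c_n|\le Cn^{-3/2}$ — whereas the paper's term-by-term integration of \eqref{pfunctional} up to the boundary $t=1$ is left implicit. What the paper's longer route buys is the intermediate Lemma \ref{funceq} itself, which ties the theorem back to the integral equation for $F$ and is of independent interest. All the details I checked — the validity of the partial-fraction representation for $n<r$ as well as $n>r$, the degenerate case $n=r$ giving $\sum_{\ell>2r}\ell^{-2}$, the bound $H_{2n}-H_{n+r}\le\frac{n-r}{n+r+1}$, and the range $0\le n\le N$ after the substitution — are sound.
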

\begin{remark}
Once again, by the convention for empty sums, we have $h_0(0)=\frac{\pi^2}{6}$. Particular cases of \eqref{hrseries} $r=0$ and $r=1$ can be rewritten in a following forms:
\begin{equation}
    \sum_{n=1}^{+\infty}c_n\frac{H_{2n}-H_n}{n}=1-\frac{\pi^2}{6}
\end{equation}

for $r=0$ and
\begin{equation}
    \sum_{n=2}^{+\infty}c_n\frac{H_{2n}-H_{n+1}}{n-1}=\frac{\pi^2}{4}-\frac{19}{8}
\end{equation}

for $r=1$.

The formula for $h_r(r)$ can be considered as a "limit case" of the formula for $n\neq r$, because $h_r(r)=\lim\limits_{n\to r} h_r(n)$ if we allow $n$ to be an arbitrary real number.
\end{remark}

To prove Theorem \ref{harmonicrelation} we need one more equivalent form of the recursive formula for $c_n$.

\begin{lemma}
\label{funceq}
For any complex $|t|<1$ and nonnegative integer $n$ we define
\[
G_n(t)=\frac{1}{n+1}+\frac{t}{n+2}+\frac{t^2}{n+3}+\ldots=\sum_{k=0}^{+\infty}\frac{t^k}{n+k+1}.
\]
Then for any $t$ inside the unit disc we have
\begin{equation}
\label{pfunctional}
\sum_{n=0}^{+\infty} c_nt^n G_{2n}(t)=1.
\end{equation}
\end{lemma}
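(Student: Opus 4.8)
The plan is to expand $G_{2n}(t)$ into its defining power series, interchange the order of summation, and recognize the resulting inner sum as an instance of the recurrence of Proposition \ref{recurrent}. Writing $G_{2n}(t)=\sum_{j\ge 0}\frac{t^j}{2n+j+1}$, the left-hand side of \eqref{pfunctional} becomes the double sum $\sum_{n\ge 0}\sum_{j\ge 0}\frac{c_n t^{n+j}}{2n+j+1}$. The key observation is the substitution $m=n+j$: since $2n+j+1=n+m+1$, collecting the terms with a fixed total exponent $m$ turns the double sum into $\sum_{m\ge 0}t^m\sum_{n=0}^{m}\frac{c_n}{n+m+1}$, and by Proposition \ref{recurrent} the inner sum is exactly $\delta_{m0}$, so the whole expression collapses to $1$.

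Before performing this rearrangement I would first record that everything in sight converges absolutely for $|t|<1$. The series $G_n(t)$ is dominated term-by-term by the geometric series $\sum_j|t|^j$, and, using the crude bound $c_n=O(\sqrt{n})$ that already follows from \eqref{normp} (or the sharper $|c_n|\le Cn^{-3/2}$ of Theorem \ref{32estimate}), the double series satisfies $\sum_{n,j}|c_n|\,|t|^{n+j}/(2n+j+1)\le\big(\sum_n|c_n|\,|t|^n\big)\big(\sum_j|t|^j\big)<\infty$. This absolute convergence legitimizes both the insertion of the power series for $G_{2n}$ and the reindexing $(n,j)\mapsto(n,m)$ with $m=n+j$, which is a bijection of $\{(n,j):n,j\ge 0\}$ onto $\{(n,m):0\le n\le m\}$; hence the formal manipulation above is rigorous and the identity follows at once.

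There is essentially no deep obstacle here: the only point requiring care is the justification of the rearrangement of the double series, which is why I would front-load the absolute-convergence estimate. As an alternative route one could instead use the integral representation $G_{2n}(t)=\int_0^1\frac{x^{2n}}{1-tx}\,dx$, which rewrites the left-hand side of \eqref{pfunctional} as $\int_0^1\frac{F(tx^2)}{1-tx}\,dx$ and reduces the lemma to the integral identity \eqref{integraleq} established earlier; but the direct reindexing seems cleaner and avoids re-deriving that identity.
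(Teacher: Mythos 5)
Your proof is correct, but it takes a genuinely different route from the paper's. The paper proves Lemma \ref{funceq} by first deriving the integral representation $G_n(t)=\int_0^1\frac{x^n}{1-tx}\,dx$, which turns the left-hand side of \eqref{pfunctional} into $\int_0^1\frac{F(tx^2)}{1-tx}\,dx$, and then invoking the integral equation \eqref{integraleq} established in Section~2 --- precisely the alternative you sketch and set aside in your last paragraph. You instead go straight from the power series of $G_{2n}$ to the recurrence of Proposition \ref{recurrent}: the reindexing $m=n+j$ with $2n+j+1=n+m+1$ collapses the double sum to $\sum_{m\ge 0}t^m\sum_{n=0}^m\frac{c_n}{n+m+1}=\sum_{m\ge 0}t^m\delta_{m0}=1$, and the absolute convergence you record (via $|c_n|=O(\sqrt{n})$ from \eqref{normp} and the factorization $\sum_{n,j}|c_n||t|^{n+j}/(2n+j+1)\le\bigl(\sum_n|c_n||t|^n\bigr)\bigl(\sum_j|t|^j\bigr)$) fully justifies the rearrangement. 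Your argument is shorter and self-contained, bypassing the integral equation entirely; it makes transparent that \eqref{pfunctional} is nothing but the generating-function form of the defining recurrence. What the paper's route buys is a reuse of machinery already built (the integral representation of $G_n$ reappears immediately afterwards in the proof of Theorem \ref{harmonicrelation}, where $\int_0^1 t^{n+r}G_{2n}(t)\,dt$ is evaluated), so the two proofs are equally rigorous but organized around different intermediate objects.
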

\begin{proof}[of lemma \ref{funceq}]
We will deduce the identity \eqref{pfunctional} from the integral equation \eqref{integraleq}. This is not the only possible proof, but it seems to be one of the simplest and also the most instructive one.

First of all, let us notice that the functions $G_n(t)$ look similar to the function $-\ln(1-t)$. For example, when $n=0$ we have $G_0(t)=-\ln(1-t)/t$. More precisely, we have

\[
t^{n+1}G_n(t)=\sum_{k=0}^{+\infty}\frac{t^{n+1+k}}{n+k+1}=\sum_{k\geq n+1}\frac{t^k}{k}.
\]
This last expression is the Taylor series for $-\ln(1-t)$, but without $n$ initial terms. Therefore,

\[t^{n+1}G_n(t)=-(\ln(1-t)+t+t^2/2+\ldots+t^n/n).
\]
It is also useful to note that $\frac{t^k}{k}=\int\limits_0^t y^{k-1}dy$. From this observation we obtain the formula

\[t^{n+1}G_n(t)=\int_0^t \sum_{k\geq n}y^k dy.
\]

The sum inside the integral can be evaluated via the geometric series and we get

\[t^{n+1}G_n(t)=\int_0^t \frac{y^k}{1-y}dy.
\]
Finally, from the change of variables $y=xt$ we see that

\[G_n(t)=\int_0^1 \frac{x^k}{1-tx}dx.
\]
Now, plugging this into the left-hand side of the identity \eqref{pfunctional} we get

\[\sum_{n=0}^{+\infty} c_nt^nG_{2n}(t)=\sum_{n=0}^{+\infty}c_n\int_0^1 \frac{t^n x^{2n}}{1-tx}dx=\int_0^1 \frac{F(tx^2)}{1-tx}dx,
\]

where $F(x)=\sum\limits_{n=0}^{+\infty} c_nx^n$. This last expression is equal to $1$ by the integral equation \eqref{integraleq}, which concludes the proof.
\end{proof}

Theorem \ref{harmonicrelation} follows quite easily from the Lemma \ref{funceq}.
\begin{proof}[of Theorem \ref{harmonicrelation}]
Let us multiply both sides of the formula \eqref{pfunctional} by $t^r$ and integrate over the interval $[0,1]$. We get the following identity:

\[
\sum_{n=0}^1 c_n\int_0^{+\infty}t^{n+r}G_{2n}(t)dt=\int_0^1 t^rdt=\frac{1}{r+1}.
\]

Therefore, to prove our formula it suffices to show that $\int\limits_0^1 t^{n+r}G_{2n}(t)dt=h_r(n)$.

This fact can be deduced by direct calculation from the infinite series expansion of $G_{2n}(t)$ as follows:

\[\int_0^1 t^{n+r}G_{2n}(t)dt=\sum_{k=0}^{+\infty}\int_0^1 \frac{t^{k+n+r}}{2n+k+1}=\sum_{k\geq 0}\frac{1}{(k+n+r+1)(2n+k+1)}.
\]

There are two possible cases: $n=r$ or $n\neq r$. In the first case we have

\[\int_0^1 t^{n+r}G_{2n}(t)dt=\sum_{k\geq 0}\frac{1}{(k+2r+1)^2}=\sum_{k\geq 2r+1}\frac{1}{k^2}=\frac{\pi^2}{6}-\sum_{0<k\leq 2r}\frac{1}{k^2}=h_r(r),
\]

as needed. Now, when $n\neq r$ we have

\[\frac{1}{(k+n+r+1)(2n+k+1)}=\frac{1}{n-r}\left(\frac{1}{k+n+r+1}-\frac{1}{2n+k+1}\right).
\]

From this formula we get

\[
\int_0^1 t^{n+r}G_{2n}(t)dt=\frac{1}{n-r}\lim_{N\to +\infty}\sum_{k=0}^N \left(\frac{1}{k+n+r+1}-\frac{1}{2n+k+1}\right)=
\]
\[\frac{1}{n-r}\lim_{N\to +\infty}\left(\sum_{k=n+r+1}^{N+n+r+1} \frac{1}{k}-\sum_{k=2n+1}^{2n+N+1}\frac{1}{k}\right)=\]
\[=\frac{1}{n-r}\lim_{N\to +\infty}(H_{N+n+r+1}-H_{n+r}-H_{2n+N+1}+H_{2n})=\frac{H_{2n}-H_{n+r}}{n-r},
\]

as needed. The last equality holds because $|H_{N+n+r+1}-H_{2n+N+1}|=O_{n,r}(N^{-1})$ and hence the limit is equal to 0. Due to previous considerations, this finishes our proof.
\end{proof}

\section{Acknowledgements}

We thank M.\,A.\,Korolev and A.\,V.\,Ustinov for discussion and suggestions. The first author is partially supported by Laboratory of Mirror Symmetry NRU HSE, RF Government grant, ag. \textnumero14.641.31.0001, the Simons Foundation, the "Young Russian Mathematics" contest and by the Russian Science Foundation under grant \textnumero18-41-05001.

\end{fulltext}
\end{document}